\newtheorem{dfn}{Definition}[section]
\newtheorem{lem}[dfn]{Lemma}
\newtheorem{thm}[dfn]{Theorem}
\newtheorem{cor}[dfn]{Corollary}
\theoremstyle{definition}
\newtheorem{asm}[dfn]{Assumption}
\newtheorem{exm}[dfn]{Example}
\newtheorem{rem}[dfn]{Remark}
\newtheorem{prop}[dfn]{Proposition}
\newtheorem{con}[dfn]{Condition}
\newcommand{\R}{\mathbb{R}}
\title{Three-Operator Splitting Method with Two-Step Inertial Extrapolation}
\date{}
\author{Olaniyi S. Iyiola\footnote{Department of Mathematics, Morgan State University, Baltimore, MD, USA; e-mail:niyi4oau@gmail.com; olaniyi.iyiola@morgan.edu},
	\hspace*{0.8mm}
	Lateef O. Jolaoso\footnote{School of Mathematical Sciences, University of Southampton, SO17 1BJ, United Kingdom; Department of Mathematics and Applied Mathematics, Sefako Makgatho Health Sciences University, P.O. Box 94 Medunsa 0204, Pretoria, South Africa; e-mail: l.o.jolaoso@soton.ac.uk.},
\hspace*{0.8mm}
 and
Yekini Shehu,\footnote{College of Mathematics and Computer Science, Zhejiang Normal University, Jinhua,
321004, People's Republic of China; e-mail: yekini.shehu@zjnu.edu.cn.}}
\begin{document}

\maketitle

\begin{abstract}
\noindent The aim of this paper is to study the weak convergence analysis of sequence of iterates generated by a three-operator splitting method of Davis and Yin incorporated with two-step inertial extrapolation for solving monotone inclusion problem involving the sum of two maximal monotone operators and a co-coercive operator in Hilbert spaces. Our results improve on the setbacks observed recently in the literature that one-step inertial Douglas-Rachford splitting method may fail to provide acceleration. Our convergence results also dispense with the summability conditions imposed on inertial parameters and the sequence of iterates assumed in recent results on multi-step inertial methods in the literature. Numerical illustrations from image restoration problem and Smoothly Clipped Absolute Deviation (SCAD) penalty problem are given to show the efficiency and advantage gained by incorporating two-step inertial extrapolation over one-step inertial extrapolation for three-operator splitting method. \\

\noindent  {\bf Keywords:} Three-operator Splitting Method; Douglas-Rachford Splitting Method; Forward-Backward Splitting Method; Two-Step Inertial; Weak convergence; Hilbert spaces.\\

\noindent {\bf 2010 MSC classification:} 47H05, 47J20,  47J25, 65K15, 90C25.

\end{abstract}

\section{Introduction}
\noindent
Throughout, we take $H$ to be a real Hilbert space where $ \langle . , . \rangle $
and $ \| \cdot \|$ denote scalar product and induced norm respectively. An operator $A:{\rm dom}(A)\subset H\rightarrow 2^H$ is called monotone
if
\begin{equation*}
   \langle \varphi-\psi,x-y\rangle \geq 0~~\forall x,y \in {\rm dom}(A),~~\varphi \in Ax,\psi\in Ay,
\end{equation*}
and if the graph of $A$
$$
   G(A):=\{(x,y):x \in {\rm dom}(A),y \in Ax\}
$$
is not properly contained in the graph of any other monotone operator, we say that $A$ is maximal monotone. Here, domain of $A$ is denoted by ${\rm dom}(A):=\{x\in H|\, Ax\neq\emptyset\}$.
Let $\eta>0$, a mapping $C: H \to H $ is called $\eta-$cocoercive,  if
$$\langle Cu-Cv,u-v\rangle\geq\eta\|Cu-Cv\|^2,\ \forall u,v\in H.$$
\noindent In this paper, we consider an inclusion problem involving three operators namely:
\begin{equation}\label{prob1}
{\rm Find} \ \, x\in H\ \, {\rm such\ that}\ \, 0\in Ax+Bx+Cx,
\end{equation}
where $A,B$ and $C$ are maximal monotone operators defined on $H$ and in addition, $C$ is $\eta-$cocoercive (i.e., $\eta-$inverse strongly monotone). We denote the set of solutions to inclusion problem \eqref{prob1} by zer$(A+B+C)$.
Finding the zero of sum of two operators is an interesting problem to diverse researchers, both in the field of Mathematics and other fields of science.\\

\noindent The forward-backward splitting algorithm (FBS) \cite{Bauschkebook} is an algorithm which has being used to solve \eqref{prob1} when $B\equiv 0$:
\begin{equation}\label{FBM}
x_{n+1}=(1-\lambda_n)x_n+\lambda_nJ_{\gamma A}(I-\gamma C)x_n,
\end{equation}
the operator $J_{\gamma A}=(I+\gamma A)^{-1}$ is the resolvent of $A$ and $\gamma\in (0,2\eta)$.
Algorithm \eqref{FBM} switches between an
explicit step given by operator $C$ with an implicit resolvent step emanating form operator $A$.
Under the condition that the sequence $\{\lambda_n\}\subset [0,\beta]$, where $\beta=2-\frac{\gamma}{2\eta}$; $\displaystyle\sum_{n=1}^{\infty}\lambda_n(\beta-\lambda_n)=+\infty$, it was established \cite[Theorem 26.14]{Bauschkebook} that $\{x_n\}$, the sequence generated by \eqref{FBM}, is weekly convergent to a point $u^*\in {\rm Fix}\big(J_{\gamma A}(I-\gamma C)\big)$ and $u^*$ solves problem \eqref{prob1}, i.e., $u^*\in {\rm zer}(A+C)$. In addition, the sequence $\{x_n\}$  strongly converges to the unique point $u^*\in {\rm zer}(A+C)$ if either $A$ is uniformly monotone on every nonempty bounded subset of dom$(A)$ or $C$ is uniformly monotone on every nonempty bounded subset of $H$. \\

\noindent The inertial method which was introduced by Polyak\cite{Polyakk} in his so-called heavy ball method, has been recently fused into Forward-Backward splitting algorithm, thus helping to speedily increase the rate of  convergence. Moudafi and Oliny \cite{ModOli} added a single valued $\eta$-cocoercive operator $C$  to the inertial proximal point algorithm:

\begin{equation}\label{intFB1}
\begin{cases}
& y_n=x_n+\theta_n(x_n-x_{n-1})\\
& x_{n+1}=J_{\gamma_n A}(y_n-\gamma_nCx_n).
\end{cases}
\end{equation}
It was shown that \eqref{intFB1} converges weakly to solution of \eqref{prob1} (with $B \equiv 0,$) provided $\gamma_n<2\eta$, where $\displaystyle\sum_{n=1}^\infty\theta_n\|x_n-x_{n-1}\|^2<+\infty$. See also \cite{LoPo} for another version of inertial Forward-Backward splitting algorithm. It is shown in \cite[Theorem 2.1]{ModOli} that \eqref{intFB1} converges for $\theta_n \in [0,1)$. In several other inertial forward-backward splitting methods, the convergence analysis is obtained when $\theta_n \in [0,1)$ (see, for example, \cite{LoPo}).\\

\noindent
Other well known algorithms for solving  \eqref{prob1} are the Douglas-Rachford splitting algorithm (DRS) (with $C \equiv 0$) and the Forward-Backward-Forward splitting algorithm (FBFS) (with $C \equiv 0$ and $B$ Lipschitz monotone)\cite{Boyd,ComPesq1,DogRach1,LionsMer1,Passty1,PTsengP}. Some variants of these three splitting algorithm has also been proposed by different researchers (see \cite{Brice,ChamPo,Laurent,VuBC}).\\

\noindent Bot and Csetnek \cite{Bot} introduced the inertial Douglas-Rashford method which is designed to handle inclusion problems in which some of the set-valued mappings involved are composed with linear continuous operators, since in general, there is no closed form of the resolvent of the composition. The inertial Douglas-Rashford method is given as follows:
\begin{equation}
	\begin{cases}
		y_n = J_{\lambda B}(x_n + \theta_n (x_n - x_{n-1}))\\
		z_n = J_{\gamma A}(2y_n - x_n - \theta_n(x_n - x_{n-1}))\\
		x_{n+1} = x_n + \theta_n (x_n - x_{n-1}) + \lambda_n (z_n -y_n), \quad \forall n \geq 1, \label{idr}
	\end{cases}
\end{equation}
and a weak convergence result was proved provided that the sequence $\{\theta_n\}$ and $\{\lambda_n\}$ satisfy the following conditions:
\begin{itemize}
	\item[(i)] $\{\theta_n\}$ is nondecreasing with $\theta_1 =0$ and $0 \leq \theta_n \leq \theta < 1$ $\forall n \geq 1$
	\item[(ii)] $\lambda, \sigma,\delta>0$ such that
	\begin{equation}
		\delta > \frac{\theta^2(1+\theta)+\theta\sigma}{1-\theta^2} \quad \text{and} \quad 0 < \lambda \leq \lambda_n \leq 2\frac{\delta - \theta(\theta(1+\theta)+\theta\delta+\sigma)}{\delta[1+\theta(1+\theta)+\theta\delta+\sigma]} \quad \forall n \geq 1. \nonumber
	\end{equation}
\end{itemize}
Note that when $\theta_n=0$ in \eqref{idr}, the iterative scheme becomes the classical Douglas-Rashford splitting algorithm (see, \cite[Theorem 25.6]{Bauschkebook}):
\begin{equation}
	\begin{cases}
		y_n = J_{\gamma B}x_n \\
		z_n = J_{\gamma A}(2y_n - x_n) \\
		x_{n+1} = x_n + \lambda_n (z_n - y_n),\label{dr}
	\end{cases}
\end{equation}
which converges weakly to a solution of problem \eqref{prob1} (with $C \equiv 0$) provided that the condition $\sum_{n \in \mathbb{N}} \lambda_n (2 - \lambda_n) = +\infty$ is satisfied.  Other forms of inertial Douglas-Rashford method can be found in, for instance, \cite{Alves,Dixit,Fan}.\\

\noindent
\textbf{Advantages of two-step proximal point algorithms.}
In \cite{Poon1,Poon2}, Poon and Liang discussed some limitations of inertial Douglas-Rachford splitting method and inertial ADMM. For example, consider the following feasibility problem in $\mathbb{R}^2$.

\begin{exm}\label{voice}
Let $T_1, T_2 \subset \mathbb{R}^2$ be two subspaces such that $T_1 \cap T_2 \neq \emptyset$. Find $x \in \mathbb{R}^2$ such that $x \in
T_1 \cap T_2$.
\end{exm}
\noindent
It was shown in \cite[Section 4]{Poon2} that two-step inertial Douglas-Rachford splitting method:
$$x_{n+1}=F_{DR}(x_n+\theta(x_n-x_{n-1})+\delta(x_{n-1}-x_{n-2}))$$
converges faster numerically than one-step inertial Douglas-Rachford splitting method
$$x_{n+1}=F_{DR}(x_n+\theta(x_n-x_{n-1}))$$
for Example \ref{voice}, where
$$F_{DR}:=\frac{1}{2}\Big(I+(2P_{T_1}-I)(2P_{T_2}-I)\Big)$$
\noindent is the Douglas-Rachford splitting operator.
In fact, it was shown using this Example \ref{voice} that one-step inertial Douglas-Rachford splitting method
$$x_{n+1}=F_{DR}(x_n+\theta(x_n-x_{n-1}))$$
converges slower than the Douglas-Rachford splitting method $$x_{n+1}=F_{DR}(x_n),$$
This example therefore shows that one-step inertial Douglas-Rachford splitting method may fail to provide acceleration. Therefore, for certain cases, using the inertia of more than two points could be beneficial. It was remark further in \cite[Chapter 4]{Liang1} that the use of more than two points $x_n, x_{n-1}$ could provide acceleration. For example,  the following two-step inertial extrapolation
\begin{equation}\label{afr}
y_n=x_n+\theta(x_n-x_{n-1})+\delta(x_{n-1}-x_{n-2})
\end{equation}
with $\theta > 0$ and $\delta< 0$ can provide acceleration. The failure of one-step inertial acceleration of ADMM was also discussed in \cite[Section 3]{Poon1} and adaptive acceleration for ADMM was proposed instead. Polyak \cite{Polyakbook} also discussed that the multi-step inertial methods can boost the speed of optimization methods even though neither the convergence nor the rate of convergence result of such multi-step inertial methods is established in \cite{Polyakbook}. Some results on multi-step inertial methods have recently been studied in \cite{CombettesGlaudin,DongJOGO}. \\

\noindent
\textbf{Our contribution.}  In this paper, we propose a relaxed Douglas-Rachford splitting method with two-step inertial extrapolation steps to solve monotone inclusion problem \eqref{prob1} and obtain weak convergence results.  The summability conditions of the inertial parameters and the sequence of iterates imposed in \cite[Algorithm 1.2]{CombettesGlaudin}, \cite[Theorem 4.2 (35)]{DongJOGO}, and \cite[Chapter 4, (4.2.5)]{Liang} are dispensed with in our results even for a more general monotone inclusion problem \eqref{prob1}. Some related methods in the literature are recovered as special cases of our proposed method. Finally, we give numerical implementations of our method and compare with other related methods using test problems from image restoration problem and Smoothly Clipped Absolute Deviation (SCAD) penalty problem \cite{SCAD}.\\

\noindent
\textbf{Outline.}
In Section \ref{Sec:Prelims}, we give some basic definitions and results needed in subsequent sections. In Section \ref{Sec3}, we derive our method from the dynamical systems and later introduce our proposed method. We also give both weak convergence results of our method in Section \ref{Sec3}. 
We give some numerical illustrations in Section \ref{Sec5} and concluding remarks are given in Section \ref{Sec6}.

\section{Preliminaries}\label{Sec:Prelims}

\noindent We present few basic properties and Lemmas that we will apply to proof the convergence of our algorithm.
\begin{lem}\rm\label{lem1}
The following identities hold in $H$:
\\
$(i)~~\| v+ w\|^2 = \|v\|^2+2\langle v,w\rangle + \|w\|^2,\ \forall v,w\in H;$\\
$(ii)~~2\langle v-w,v-u\rangle = \|v-w\|^2 + \|v-u\|^2 - \|u-w\|^2,\ \forall u,v,w\in H.$
\end{lem}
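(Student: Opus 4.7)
The plan is to prove both identities directly from the defining relation $\|x\|^2=\langle x,x\rangle$ and bilinearity/symmetry of the real inner product, so no external results are needed. Part $(i)$ will come first, and part $(ii)$ will then be obtained as a quick algebraic consequence of $(i)$.

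For $(i)$, I would write $\|v+w\|^2=\langle v+w,\,v+w\rangle$ and expand by bilinearity into four terms $\langle v,v\rangle+\langle v,w\rangle+\langle w,v\rangle+\langle w,w\rangle$. Using symmetry of the real inner product to combine the two mixed terms into $2\langle v,w\rangle$, and re-interpreting $\langle v,v\rangle$ and $\langle w,w\rangle$ as $\|v\|^2$ and $\|w\|^2$, gives the claimed identity immediately.

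For $(ii)$, my plan is to apply $(i)$ (with appropriate sign changes, i.e., the analogous identity $\|a-b\|^2=\|a\|^2-2\langle a,b\rangle+\|b\|^2$) to the three norms on the right-hand side, namely to $\|v-w\|^2$, $\|v-u\|^2$, and $\|u-w\|^2$. Summing the first two and subtracting the third, the terms $\|u\|^2$ and $\|w\|^2$ cancel, leaving $2\|v\|^2-2\langle v,w\rangle-2\langle v,u\rangle+2\langle u,w\rangle$. On the other side, expanding $2\langle v-w,\,v-u\rangle$ by bilinearity produces exactly the same four terms, which establishes equality.

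There is no real obstacle here: both identities are standard consequences of the inner-product axioms, and the only care needed is bookkeeping of signs in $(ii)$ and making sure the cross terms combine correctly via the symmetry $\langle v,w\rangle=\langle w,v\rangle$ of the real inner product on $H$.
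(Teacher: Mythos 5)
Your proof is correct: both identities follow exactly as you describe from bilinearity and symmetry of the real inner product, and the sign bookkeeping in part $(ii)$ checks out (the $\|u\|^2$ and $\|w\|^2$ terms cancel and the cross terms match the expansion of $2\langle v-w,v-u\rangle$). The paper states this lemma without proof, treating it as a standard fact, so your argument simply supplies the routine verification the authors omitted.
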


Following the ideas  by Davis and Yin \cite{DaY} for solving inclusion problem \eqref{prob1}, we introduce an operator $S$, for some $\gamma>0$, defined by
\begin{equation}\label{pb1}
T:=J_{\gamma A}\circ(2J_{\gamma B}-I-\gamma C\circ J_{\gamma B})+I-J_{\gamma B}.
\end{equation}
The following proposition holds about $T$ defined in \eqref{pb1}.

\begin{prop} \cite[Proposition 2.1]{DaY}\label{prop1}
Suppose $A$ and $B$ are maximal monotone operators and $C$ is a $\eta-$cocoercive mapping. Let $\gamma\in(0,2\eta)$. Then operator $T$ defined in \eqref{pb1} is a $\beta-$averaged mapping with $\beta:=\dfrac{2\eta}{4\eta-\gamma}<1$ and ${\rm zer}(A+B+C)=J_{\gamma B}(F(T))$. Furthermore, if
$\gamma\in(0,2\eta\epsilon)$, where $\epsilon \in (0,1)$, then for all $w,z \in H$,
\begin{eqnarray}\label{sitdown}
\|Tz-Tw\|^2&\leq&\|z-w\|^2-\frac{1-\beta}{\beta}\|(I-T)z-(I-T)w\|^2\nonumber\\
&&- \gamma\Big(2\eta-\frac{\gamma}{\epsilon}\Big)\|C\circ J_{\gamma B}(z)-C\circ J_{\gamma B}(w)\|^2.
\end{eqnarray}
\end{prop}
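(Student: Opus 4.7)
The plan is to follow the two-part structure typical of Davis--Yin style proofs: first establish the fixed-point characterization and then derive the quantitative averaged-type inequality.

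For the identity $\mathrm{zer}(A+B+C) = J_{\gamma B}(F(T))$, I would set $u = J_{\gamma B}(z)$ and observe that
\[
T(z) = J_{\gamma A}(2u - z - \gamma Cu) + z - u,
\]
so $T(z) = z$ is equivalent to $J_{\gamma A}(2u - z - \gamma Cu) = u$, i.e., $u - z - \gamma Cu \in \gamma Au$. Since $u = J_{\gamma B}(z)$ means $z - u \in \gamma Bu$, adding the two inclusions yields $0 \in Au + Bu + Cu$. The converse direction is symmetric: given $u \in \mathrm{zer}(A+B+C)$, pick $a \in Au$, $c = Cu$, $b \in Bu$ with $a+b+c = 0$, set $z := u + \gamma b$, and reverse the computation to check $T(z) = z$ with $J_{\gamma B}(z) = u$.

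For the main inequality, introduce $u := J_{\gamma B}(z)$, $v := J_{\gamma B}(w)$, $p := J_{\gamma A}(2u - z - \gamma Cu)$, $q := J_{\gamma A}(2v - w - \gamma Cv)$. Then
\[
T(z) - T(w) = (p-q) + (z-w) - (u-v), \qquad (I-T)(z) - (I-T)(w) = (u-v) - (p-q).
\]
I would use firm nonexpansivity of $J_{\gamma A}$, which yields
\[
\|p-q\|^2 \le \langle p-q,\,(2u - z - \gamma Cu) - (2v - w - \gamma Cv)\rangle,
\]
combined with firm nonexpansivity of $J_{\gamma B}$ to control $\|u-v\|^2$ against $\|z-w\|^2$, and the $\eta$-cocoercivity $\langle Cu - Cv, u-v\rangle \ge \eta \|Cu - Cv\|^2$ to absorb the $\gamma C$ perturbation. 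Expanding $\|Tz - Tw\|^2$ via Lemma \ref{lem1}(i) and regrouping should produce the desired inequality with the $\beta$-averaged coefficient on the $\|(I-T)z - (I-T)w\|^2$ term and an additional nonnegative cocoercivity residual.

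The main obstacle is the bookkeeping required to simultaneously extract the constant $\beta = 2\eta/(4\eta - \gamma)$ and the extra term $\gamma(2\eta - \gamma/\epsilon)\|Cu - Cv\|^2$. The key maneuver is to split the cross-term involving $\langle p-q,\,-\gamma(Cu - Cv)\rangle$ by Young's inequality with weight $\epsilon$, so that $2\gamma\langle \cdot,\,Cu - Cv\rangle$ is bounded above by $\epsilon^{-1}\gamma^2 \|Cu - Cv\|^2$ plus a term that can be merged with the firm-nonexpansivity estimate. This leaves exactly $\gamma(2\eta - \gamma/\epsilon)\|Cu - Cv\|^2$ as unused cocoercivity, which is nonnegative for $\gamma < 2\eta \epsilon$. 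Setting $\epsilon = 1$ (the marginal case) recovers the pure $\beta$-averagedness of $T$, and tightening $\epsilon$ gives the sharper inequality \eqref{sitdown} needed for the convergence analysis in the sequel.
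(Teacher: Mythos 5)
The paper does not prove this proposition at all: it is imported verbatim as \cite[Proposition~2.1]{DaY}, so there is no in-paper argument to compare against, and your sketch should be measured against the original Davis--Yin proof. Your outline is essentially that proof. The fixed-point characterization is correct and complete as sketched: $Tz=z$ iff $u-z-\gamma Cu\in\gamma Au$ with $u=J_{\gamma B}z$, and adding this to $z-u\in\gamma Bu$ gives $0\in Au+Bu+Cu$, with the converse obtained by setting $z=u+\gamma b$ for a suitable $b\in Bu$. For the inequality, your decomposition of $Tz-Tw$ and $(I-T)z-(I-T)w$ and the use of firm nonexpansiveness of both resolvents is exactly right, but one step is stated loosely: Young's inequality should not be applied directly to $\langle p-q,\,-\gamma(Cu-Cv)\rangle$. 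The Davis--Yin bookkeeping first rewrites $\langle p-q,\,Cu-Cv\rangle=\langle (p-q)-(u-v),\,Cu-Cv\rangle+\langle u-v,\,Cu-Cv\rangle$, applies $\eta$-cocoercivity to the second piece, and only then applies Young's inequality (with the weight tuned by $\epsilon$) to the first piece, whose left factor is precisely $-\bigl((I-T)z-(I-T)w\bigr)$; this is what isolates the coefficient $\tfrac{1-\beta}{\beta}$ on $\|(I-T)z-(I-T)w\|^2$ while leaving the residual $\gamma\bigl(2\eta-\tfrac{\gamma}{\epsilon}\bigr)\|Cu-Cv\|^2$. Also, a small correction to your closing remark: $\epsilon=1$ is excluded by hypothesis; the $\beta$-averagedness for all $\gamma\in(0,2\eta)$ is obtained simply by discarding the nonnegative residual term, not by a marginal choice of $\epsilon$. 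With these two adjustments your sketch fills in to the standard proof.
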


\begin{dfn}
A mapping $B$ is demiclosed if for every sequence $\{y_n\}$ that converges weakly to $y^*$ with $\{By_n\}$ strongly converging to $x^*$, then $B(y)=x^*$.
\end{dfn}

\begin{lem}(Goebel and Reich \cite{GobRe})\label{lem4}
Let $C\subset H$ be nonempty, closed and convex and $T:C\rightarrow C$  be a nonexpansive mapping. Then $I-T$ is demiclosed at 0.
\end{lem}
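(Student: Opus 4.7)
The plan is to establish the classical demiclosedness principle for nonexpansive mappings via Opial's property of Hilbert spaces. Let $\{y_n\} \subset C$ satisfy $y_n \rightharpoonup y^*$ with $(I-T)y_n \to 0$ strongly; since $C$ is closed and convex it is weakly closed, so $y^* \in C$ and $Ty^*$ is well defined. I must show $(I-T)y^* = 0$, i.e.\ $Ty^* = y^*$.

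I would proceed by contradiction, assuming $Ty^* \neq y^*$. Combining the triangle inequality with nonexpansiveness of $T$ gives
$$\|y_n - Ty^*\| \;\leq\; \|y_n - Ty_n\| + \|Ty_n - Ty^*\| \;\leq\; \|y_n - Ty_n\| + \|y_n - y^*\|.$$
Taking $\liminf$ as $n\to\infty$ and using $\|y_n - Ty_n\| \to 0$ produces
$$\liminf_{n\to\infty} \|y_n - Ty^*\| \;\leq\; \liminf_{n\to\infty} \|y_n - y^*\|.$$
I would then invoke Opial's property in Hilbert space: whenever $y_n \rightharpoonup y^*$ and $z \neq y^*$, one has $\liminf_n \|y_n - y^*\| < \liminf_n \|y_n - z\|$. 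Applying this with $z = Ty^*$ directly contradicts the displayed inequality above, forcing $Ty^* = y^*$ as required.

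The only non-routine ingredient is Opial's property itself, which I would establish from the expansion
$$\|y_n - z\|^2 \;=\; \|y_n - y^*\|^2 + 2\langle y_n - y^*,\, y^* - z\rangle + \|y^* - z\|^2.$$
Since $y_n \rightharpoonup y^*$, the cross term $\langle y_n - y^*,\, y^* - z\rangle$ tends to $0$, so taking $\liminf$ gives $\liminf_n \|y_n - z\|^2 = \liminf_n \|y_n - y^*\|^2 + \|y^* - z\|^2$, which is strictly larger than $\liminf_n \|y_n - y^*\|^2$ whenever $z \neq y^*$.

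The main obstacle is really just verifying that the non-strict bound coming from nonexpansiveness is genuinely incompatible with Opial's strict inequality; this is where the strong convergence assumption $(I-T)y_n \to 0$ is essential, since without it the $\|y_n - Ty_n\|$ term would not vanish in the limit and the contradiction would collapse. Weak closedness of $C$ and the use of $\liminf$ (rather than $\lim$, which need not exist) are the secondary bookkeeping points I would be careful about.
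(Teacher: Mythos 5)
Your argument is correct and complete: it is the standard demiclosedness proof via Opial's property, with the weak closedness of $C$, the triangle-inequality-plus-nonexpansiveness estimate, and the strict Opial inequality all deployed in the right places. The paper itself offers no proof of this lemma --- it is stated with a citation to Goebel and Reich only --- so there is nothing to compare against beyond noting that your route (deriving Opial's property from the identity $\|y_n - z\|^2 = \|y_n - y^*\|^2 + 2\langle y_n - y^*, y^* - z\rangle + \|y^* - z\|^2$ and then arguing by contradiction) is exactly the classical one the cited reference uses.
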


\begin{lem} \label{simple}
Let $x,y,z \in H$ and $a,b \in \mathbb{R}$. Then
\begin{eqnarray*}
\|(1+a)x-(a-b)y-bz\|^2&=& (1+a)\|x\|^2-(a-b)\|y\|^2-b\|z\|^2 \\
&&+(1+a)(a-b)\|x-y\|^2+b(1+a)\|x-z\|^2\\
&&-b(a-b)\|y-z\|^2.
\end{eqnarray*}
\end{lem}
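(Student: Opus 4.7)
The plan is to recognize that the three scalar coefficients appearing on the left-hand side, namely $1+a$, $-(a-b)$, and $-b$, add up to
$$(1+a)-(a-b)-b = 1,$$
so the expression inside the norm is an affine combination of $x,y,z$. Thus the identity is an instance of the standard three-point affine-combination identity: whenever $\alpha+\beta+\gamma = 1$,
$$\|\alpha x+\beta y+\gamma z\|^2 = \alpha\|x\|^2+\beta\|y\|^2+\gamma\|z\|^2 - \alpha\beta\|x-y\|^2-\alpha\gamma\|x-z\|^2-\beta\gamma\|y-z\|^2.$$
Taking $\alpha=1+a$, $\beta=-(a-b)$, $\gamma=-b$ gives $-\alpha\beta=(1+a)(a-b)$, $-\alpha\gamma=b(1+a)$, and $-\beta\gamma=-b(a-b)$, which match the six terms on the right-hand side of the claim verbatim.

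To establish the underlying identity I would simply expand both sides. On the left, use $\|u\|^2=\langle u,u\rangle$ to get
$$\|\alpha x+\beta y+\gamma z\|^2 = \alpha^2\|x\|^2+\beta^2\|y\|^2+\gamma^2\|z\|^2+2\alpha\beta\langle x,y\rangle+2\alpha\gamma\langle x,z\rangle+2\beta\gamma\langle y,z\rangle.$$
On the right, use Lemma \ref{lem1}(i) to expand each of $\|x-y\|^2$, $\|x-z\|^2$, $\|y-z\|^2$, and then collect. The coefficient of $\|x\|^2$ becomes $\alpha-\alpha\beta-\alpha\gamma = \alpha(1-\beta-\gamma) = \alpha^2$ (using $\alpha+\beta+\gamma=1$), and similarly for $\|y\|^2$ and $\|z\|^2$; the cross-term coefficients match immediately. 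Alternatively, one can avoid the general identity entirely and carry out the expansion directly for the specific coefficients $(1+a,-(a-b),-b)$, verifying term-by-term that the coefficients of $\|x\|^2$, $\|y\|^2$, $\|z\|^2$, $\langle x,y\rangle$, $\langle x,z\rangle$, $\langle y,z\rangle$ on the right reduce to $(1+a)^2$, $(a-b)^2$, $b^2$, $-2(1+a)(a-b)$, $-2b(1+a)$, $2b(a-b)$ respectively.

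There is no real obstacle here; the lemma is a purely algebraic identity whose proof is pure bookkeeping. The only thing to be careful about is sign tracking in the terms $-(a-b)y$ and $-bz$, which is why the $(a-b)$ factor (rather than $b-a$) is kept throughout. Because the identity holds for arbitrary real $a,b$, it will be applied later in the convergence analysis with $a,b$ chosen to match the two inertial parameters $\theta$ and $\delta$ of the proposed two-step inertial scheme, so the symmetry of the expression in $a$ and $b$ is exactly what the subsequent estimates will exploit.
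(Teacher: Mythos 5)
Your proposal is correct and, at its core, is the same argument the paper gives: the paper expands $\|(1+a)x-(a-b)y-bz\|^2$ as an inner product and replaces each cross term via $2\langle u,v\rangle=\|u\|^2+\|v\|^2-\|u-v\|^2$ (Lemma \ref{lem1}(ii)), which is exactly the ``direct expansion'' you describe as your alternative route. Your framing through the general affine-combination identity with $\alpha+\beta+\gamma=1$ is a clean repackaging of the same computation and adds a useful sanity check on the coefficients, but it is not a genuinely different proof.
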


\begin{proof}
\begin{eqnarray*}
\|(1+a)x-(a-b)y-bz\|^2&=&\langle (1+a)x-(a-b)y-bz,(1+a)x-(a-b)y-bz \rangle \\
&=& (1+a)^2\|x\|^2-2(1+a)(a-b)\langle x,y\rangle-2b(1+a)\langle x,z\rangle\\
&&+2b(a-b)\langle y,z\rangle+ (a-b)^2\|y\|^2+b^2\|z\|^2\\
&=& (1+a)^2\|x\|^2-(1+a)(a-b)(\|x\|^2+\|y\|^2-\|x-y\|^2) \\
&&-b(1+a)(\|x\|^2+\|z\|^2-\|x-z\|^2)\\
&&+b(a-b)(\|y\|^2+\|z\|^2-\|y-z\|^2)+ (a-b)^2\|y\|^2+b^2\|z\|^2\\
&=&(1+a)\|x\|^2-(a-b)\|y\|^2-b\|z\|^2\\
&&+(1+a)(a-b)\|x-y\|^2+b(1+a)\|x-z\|^2-b(a-b)\|y-z\|^2.
\end{eqnarray*}

\end{proof}

\section{Our Contributions}\label{Sec3}

\subsection{Motivations from Dynamical Systems}

\noindent Consider the following second order dynamical system \cite{Botdiff}
\begin{equation}\label{situ}
 \ddot{x}(t)+\alpha\dot{x}(t)+\beta(x(t)-T(x(t)))=0,~~x(0)=x_0, \dot{x}(0)=v_0,
\end{equation}
where $\alpha,\beta \geq 0$ and $T$ is a nonexpansive mapping. Let $0<\omega_2<\omega_1$ be two weights such that
$\omega_1+\omega_2=1$, $h>0$ is the time step-size, $t_n=nh$ and $x_n=x(t_n)$. Consider an explicit Euler forward discretization with respect to $T$, explicit discretization of $\dot{x}(t)$, and a weighted sum of explicit and implicit discretization of $\ddot{x}(t)$, we have
\begin{eqnarray}\label{situ1}
&&\frac{\omega_1}{h^2}(x_{n+1}-2x_n+x_{n-1})+\frac{\omega_2}{h^2}(x_n-2x_{n-1}+x_{n-2})\nonumber \\
&&+\frac{\alpha}{h}(x_n-x_{n-1})+\beta(y_n-Ty_n)=0,
\end{eqnarray}
where $y_n$ performs "extrapolation" onto the points $x_n, x_{n-1}$ and $x_{n-2}$, which will be chosen later. We
observe that since $I-T$ is Lipschitz continuous, there is some flexibility
in this choice. Therefore, \eqref{situ1} becomes
\begin{eqnarray}\label{situ2}
  &&x_{n+1}-2x_n+x_{n-1}+\frac{\omega_2}{\omega_1}(x_n-2x_{n-1}+x_{n-2})\nonumber \\
  &&+ \frac{\alpha h}{\omega_1}(x_n-x_{n-1})+\frac{\beta h^2}{\omega_1}(y_n-Ty_n)=0.
\end{eqnarray}
This implies that
\begin{eqnarray}\label{situ3}
  &&x_{n+1}=2x_n-x_{n-1}-\frac{\omega_2}{\omega_1}(x_n-2x_{n-1}+x_{n-2}) \nonumber \\
 &&- \frac{\alpha h}{\omega_1}(x_n-x_{n-1})-\frac{\beta h^2}{\omega_1}(y_n-Ty_n)\nonumber \\
&=&x_n+ (x_n-x_{n-1})-\frac{\omega_2}{\omega_1}(x_n-x_{n-1})+\frac{\omega_2}{\omega_1}(x_{n-1}-x_{n-2}) \nonumber \\
&&- \frac{\alpha h}{\omega_1}(x_n-x_{n-1})-\frac{\beta h^2}{\omega_1}(y_n-Ty_n)\nonumber \\
&=& x_n+ \Big(1-\frac{\omega_2}{\omega_1}-\frac{\alpha h}{\omega_1}\Big)(x_n-x_{n-1})+ \frac{\omega_2}{\omega_1}(x_{n-1}-x_{n-2}) \nonumber \\
&&-\frac{\beta h^2}{\omega_1}(y_n-Ty_n).
\end{eqnarray}
Set
$$
\theta:=1-\frac{\omega_2}{\omega_1}-\frac{\alpha h}{\omega_1},~~\delta:=\frac{\omega_2}{\omega_1},~~ \rho:=\frac{\beta h^2}{\omega_1}.
$$
\noindent Then we have from \eqref{situ3} that
\begin{eqnarray}\label{situ4}
x_{n+1}&=&x_n+\theta(x_n-x_{n-1})+\delta(x_{n-1}-x_{n-2})\nonumber \\
  &&-\rho y_n+\rho_nTy_n.
\end{eqnarray}
Choosing $y_n=x_n+\theta (x_n-x_{n-1})+\delta (x_{n-1}-x_{n-2})$. Then \eqref{situ4} becomes
\begin{eqnarray}\label{situ5}
\left\{  \begin{array}{ll}
      & y_n=x_n+\theta (x_n-x_{n-1})+\delta(x_{n-1}-x_{n-2}),\\
      & x_{n+1}=(1-\rho)y_n+\rho Ty_n
      \end{array}
      \right.
\end{eqnarray}
This is two-step inertial Krasnoselskii-Mann iteration. We intend to apply method  \eqref{situ5} to study monotone inclusion problem \eqref{prob1}  in the next subsection of this paper.

\subsection{Convergence Analysis}

\noindent In this subsection, we present our proposed method, which is an application of \eqref{situ5} to monotone inclusion problem \ref{ALGG} and discuss its convergence.

\begin{asm}\label{assum1} First, we suppose that the following conditions hold:
\begin{itemize}
	\item[(A1)] $A$ and $B$ are set-valued maximal monotone operators and $C$ is a $\eta-$cocoercive mapping.
	\item[(A2)] Let $\gamma\in(0,2\eta)$ and define $\beta:=\dfrac{2\eta}{4\eta-\gamma}<1$.
\item[(A3)] Assume ${\rm zer}(A+B+C)\neq\emptyset$.
	\end{itemize}
\end{asm}

\noindent In order to solve \eqref{prob1}, we consider the following two-step inertial Douglas-Rachford splitting method:
\noindent Pick $x_{-1},x_0,x_1 \in H, \theta \in [0,1), \delta \leq 0$ and generate a sequence $\{x_n\}\subset H$ by
\begin{eqnarray}\label{ALGG}
\begin{cases}
y_n = x_n + \theta(x_n - x_{n-1}) + \delta(x_{n-1} - x_{n-2})\\
w_n = J_{\gamma B}(y_n)\\
z_n = J_{\gamma A}(2w_n - y_n - \gamma Cw_n)\\
x_{n+1} = y_n - \rho w_n + \rho z_n,
\end{cases}
\end{eqnarray}
where the following conditions are fulfilled:
\begin{con}\noindent\label{CON}
	\begin{itemize}
		\item[(i)] $0\leq \theta < \min \{\frac{1}{2}, \frac{1-\beta\rho}{1+\beta\rho}\}$;
		\item[(ii)] $ 0 < \rho < \frac{1}{\beta}$;
		\item[(iii)] $\delta\leq 0$ such that 
\begin{eqnarray*}
\max\Big\{-\frac{(1-\beta\rho-\theta-\beta\theta\rho)}{1-\beta\rho},
\frac{\beta\rho\theta(1+\theta)-(1-\beta\rho)(1-\theta)^2}{1+\theta}\Big\}<\delta;
\end{eqnarray*}
and 
\begin{eqnarray*}
\beta\rho\theta(1+\theta)-(1-\beta\rho)(1-\theta)^2
<(2\theta-\beta\rho+2)\delta+(1-2\beta\rho)\delta^2.
\end{eqnarray*}
	\end{itemize}
\end{con}

We give following remarks on our the conditions imposed on the iterative parameters given in Condition \ref{CON} above.

\begin{rem}\label{beforeAppl}
(a) Observe that Condition \ref{CON} (ii) can be considered as an over-relaxation parameter condition for $\rho$ since $\beta<1$.\\[-1mm]

\noindent
(b) Condition \ref{CON} (iii) is true if $\delta=0$. If furthermore, $\rho:=1, \beta:=\frac{1}{2}$ with $C:=0$ and $A:=0$, then our proposed method \eqref{ALGG} and Condition \ref{CON} reduce to \cite[Proposition 2.1]{Alvarez}.\\[-1mm]

\noindent
(c) Suppose $\theta=0=\delta$, then our proposed method \eqref{ALGG} reduces to \cite[Algorithm 1]{DaY}.\hfill $\Diamond$
\end{rem}

\begin{rem}
In our proposed algorithm \eqref{ALGG}, we allow inertial parameter $\delta$ to be non-positive and inertial parameter $\theta$ to be non-negative. We will show that one can benefit from negative choice of the inertial parameter $\delta$ in our numerical implementations given in Section \ref{Sec5}.

\end{rem}

\noindent We start with the following lemma which establishes the boundedness of our generated sequence of iterates from method \eqref{ALGG}.

\begin{lem}\label{lem31}
	The sequence $\{x_n\}$ generated by \eqref{ALGG} is bounded when Assumption \ref{assum1} and Condition \ref{CON} are satisfied.
\end{lem}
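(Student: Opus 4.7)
The plan is to reformulate \eqref{ALGG} as a two-step inertial Krasnosel'skii--Mann iteration driven by the Davis--Yin operator of Proposition \ref{prop1}, extract a one-step Fej\'er-type inequality from its $\beta$-averagedness, and then control the two-step inertial error by a Lyapunov functional whose monotone decrease is calibrated to Condition \ref{CON}(iii). First, I would verify that the inner block of \eqref{ALGG} collapses to $Ty_n=z_n+y_n-w_n$, so that $x_{n+1}=(1-\rho)y_n+\rho Ty_n$, exactly as in \eqref{situ5}. Since $\mathrm{zer}(A+B+C)\neq\emptyset$, Proposition \ref{prop1} supplies $p\in F(T)$. Writing $T=(1-\beta)I+\beta N$ for the associated nonexpansive $N$, the iterate becomes $x_{n+1}=(1-\rho\beta)y_n+\rho\beta Ny_n$, which is a genuine convex combination because $\rho\beta\in(0,1)$ by Condition \ref{CON}(ii). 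The polarisation identity together with $\|Ny_n-p\|\le\|y_n-p\|$ and $x_{n+1}-y_n=\rho\beta(Ny_n-y_n)$ then yields
\begin{equation*}
\|x_{n+1}-p\|^2\;\le\;\|y_n-p\|^2-\frac{1-\rho\beta}{\rho\beta}\|x_{n+1}-y_n\|^2.
\end{equation*}

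Next, expand $\|y_n-p\|^2$ by applying Lemma \ref{simple} with $x=x_n-p$, $y=x_{n-1}-p$, $z=x_{n-2}-p$, $a=\theta$, $b=\delta$, noting the algebraic identity $(1+\theta)-(\theta-\delta)-\delta=1$. Writing $\phi_n:=\|x_n-p\|^2$ and substituting into the previous estimate, I would regroup the $\phi$-coefficients to obtain the three-term recursion
\begin{equation*}
\phi_{n+1}-\theta\phi_n-\delta\phi_{n-1}\;\le\;\phi_n-\theta\phi_{n-1}-\delta\phi_{n-2}+E_n,
\end{equation*}
where $E_n$ is a quadratic form in the consecutive differences $x_n-x_{n-1}$, $x_{n-1}-x_{n-2}$, $x_{n+1}-x_n$. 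The $-\|x_{n+1}-y_n\|^2$ contribution is expanded through $x_{n+1}-y_n=(x_{n+1}-x_n)-\theta(x_n-x_{n-1})-\delta(x_{n-1}-x_{n-2})$ followed by Young's inequality, while the $\|x_n-x_{n-2}\|^2$ contribution is absorbed via $\|x_n-x_{n-2}\|^2\le 2\|x_n-x_{n-1}\|^2+2\|x_{n-1}-x_{n-2}\|^2$.

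The core of the argument is then to choose nonnegative constants $\mu_1,\mu_2$ so that
\begin{equation*}
\Lambda_n:=\phi_n-\theta\phi_{n-1}-\delta\phi_{n-2}+\mu_1\|x_n-x_{n-1}\|^2+\mu_2\|x_{n-1}-x_{n-2}\|^2
\end{equation*}
is non-increasing. Matching the coefficients of $\|x_{n+1}-x_n\|^2$, $\|x_n-x_{n-1}\|^2$, and $\|x_{n-1}-x_{n-2}\|^2$ in the estimate for $\Lambda_{n+1}-\Lambda_n$ produces a linear feasibility system for $(\mu_1,\mu_2)$ together with the Young parameter $\epsilon\in(0,1)$. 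The second inequality of Condition \ref{CON}(iii) is exactly the algebraic condition under which this system admits a solution, while the first inequality there, together with Condition \ref{CON}(i), is what ensures $\mu_1>0$ and $\mu_2\ge 0$. Monotonicity of $\Lambda_n$ then gives $\Lambda_n\le\Lambda_2$, whence $\phi_n\le\theta\phi_{n-1}+|\delta|\phi_{n-2}+\Lambda_2$; because Condition \ref{CON}(i)-(iii) together imply $\theta+|\delta|<1$, a standard scalar second-order recursion argument delivers $\sup_n\phi_n<\infty$, i.e.\ $\{x_n\}$ is bounded.

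The main obstacle is the bookkeeping in the final step: verifying that the pair of inequalities in Condition \ref{CON}(iii) is precisely what certifies the feasibility of the coefficient-matching system and the positivity of the Lyapunov weights. The sign constraint $\delta\le 0$ is essential here; it converts $-\delta\phi_{n-2}$ into a positive term inside $\Lambda_n$, giving the functional the room to absorb the second-difference inertia $\delta(x_{n-1}-x_{n-2})$ introduced by the two-step extrapolation, which one-step inertial analyses never have to face.
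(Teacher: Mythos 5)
Your proposal is correct and follows essentially the same route as the paper: the Krasnosel'skii--Mann reformulation, the Fej\'er-type inequality with coefficient $\tfrac{1-\beta\rho}{\beta\rho}$, the expansion of $\|y_n-p\|^2$ via Lemma \ref{simple}, and a Lyapunov functional of exactly the form $\Gamma_n + c_1\|x_{n-1}-x_{n-2}\|^2$ whose monotone decrease is certified by Condition \ref{CON}(iii). The only (immaterial) deviation is the endgame: you extract boundedness from the scalar recursion $\phi_n\le\theta\phi_{n-1}+|\delta|\phi_{n-2}+\Lambda_2$ with $\theta+|\delta|<1$, whereas the paper bounds $\|x_n-x^*\|^2$ directly via $(1-2\theta)\|x_n-x^*\|^2\le\Gamma_n\le\bar\Gamma_1$ using $\theta<\tfrac12$ and $\delta\le 0$.
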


\begin{proof}
Following the ideas of Davis and Yin \cite{DaY}, our proposed method \eqref{ALGG} can be converted to a fixed point iteration of the form:
\begin{eqnarray}\label{ALG}
\begin{cases}
	y_n = x_n + \theta(x_n - x_{n-1}) + \delta(x_{n-1} - x_{n-2})\\
	x_{n+1} = (1-\rho)y_n + \rho Ty_n,
\end{cases}
\end{eqnarray}
where
$T:= I - J_{\gamma B} + J_{\gamma A} \circ (2J_{\gamma B} - I - \gamma C \circ J_{\gamma B})$ satisfying \eqref{sitdown}.
Let $x^*\in F(T).$ Then
\begin{eqnarray*}
y_n &=& x_n + \theta(x_n - x_{n-1}) + \delta(x_{n-1} - x_{n-2}) - x^*\\
&=& (1+\theta)(x_n - x^*) - (\theta - \delta)(x_{n-1} - x^*) - \delta(x_{n-2} - x^*).
\end{eqnarray*}
Consequently, we have by Lemma \ref{simple}, that
\begin{eqnarray}\label{YEK}
\|y_n - x^*\|^2 &=& \|(1+\theta)(x_n - x^*) - (\theta - \delta)(x_{n-1} - x^*) - \delta(x_{n-2} - x^*)\|^2\nonumber\\
&=& (1+\theta)\|x_n - x^*\|^2 - (\theta - \delta)\|x_{n-1} - x^*\|^2 - \delta\|x_{n-2} - x^*\|^2\nonumber\\
&&\;\;+ (1+\theta)(\theta - \delta)\|x_n - x_{n-1}\|^2 + \delta(1-\theta)\|x_n - x_{n-2}\|^2\nonumber\\
&&\;\;-\delta(\theta - \delta)\|x_{n-1} - x_{n-2}\|^2.
\end{eqnarray}
Observe that
\begin{eqnarray*}
2\theta \langle x_{n+1}-x_n, x_n-x_{n-1}\rangle &=& 2 \langle \theta(x_{n+1}-x_n), x_n-x_{n-1}\rangle \nonumber \\
&\leq&2|\theta| \|x_{n+1}-x_n\|\|x_n-x_{n-1}\|\nonumber\\
&=&2\theta \|x_{n+1}-x_n\|\|x_n-x_{n-1}\|
\end{eqnarray*}
and so
\begin{equation}\label{happy1}
-2\theta \langle x_{n+1}-x_n, x_n-x_{n-1}\rangle \geq -2\theta \|x_{n+1}-x_n\|\|x_n-x_{n-1}\|.
\end{equation}
Also,
\begin{eqnarray*}
2\delta \langle x_{n+1}-x_n,x_{n-1}-x_{n-2} \rangle &=& 2\langle \delta(x_{n+1}-x_n),x_{n-1}-x_{n-2} \rangle \nonumber \\
&\leq&2|\delta| \|x_{n+1}-x_n\|\|x_{n-1}-x_{n-2}\|
\end{eqnarray*}
which implies that
\begin{equation}\label{happy2}
-2\delta \langle x_{n+1}-x_n,x_{n-1}-x_{n-2} \rangle  \geq -2|\delta| \|x_{n+1}-x_n\|\|x_{n-1}-x_{n-2}\|.
\end{equation}
Similarly, we note that
\begin{eqnarray*}
2\delta\theta \langle x_{n-1}-x_n,x_{n-1}-x_{n-2}\rangle &=& 2\langle \delta\theta(x_{n-1}-x_n),x_{n-1}-x_{n-2}\rangle \nonumber \\
&\leq&2|\delta|\theta \|x_{n-1}-x_n\|\|x_{n-1}-x_{n-2}\|\nonumber\\
&=&2|\delta|\theta \|x_n-x_{n-1}\|\|x_{n-1}-x_{n-2}\|
\end{eqnarray*}
and thus,
\begin{eqnarray}\label{happy3}
2\delta\theta \langle x_n-x_{n-1},x_{n-1}-x_{n-2}\rangle &=& -2 \delta\theta \langle x_{n-1}-x_n,x_{n-1}-x_{n-2}\rangle  \nonumber \\
&\geq& -2|\delta|\theta \|x_n-x_{n-1}\|\|x_{n-1}-x_{n-2}\|.
\end{eqnarray}
By \eqref{happy1}, \eqref{happy2} and \eqref{happy3}, we obtain
\begin{eqnarray}\label{YEK1}
\|x_{n+1}-y_n\|^2 &=& \|x_{n+1}-(x_n+\theta(x_n-x_{n-1})+\delta(x_{n-1}-x_{n-2}))\|^2 \nonumber\\
&=&\|x_{n+1}-x_n-\theta(x_n-x_{n-1})-\delta(x_{n-1}-x_{n-2})\|^2 \nonumber\\
&=&\|x_{n+1}-x_n\|^2-2\theta \langle x_{n+1}-x_n, x_n-x_{n-1}\rangle \nonumber\\
&&-2\delta \langle x_{n+1}-x_n,x_{n-1}-x_{n-2} \rangle+\theta^2\|x_n-x_{n-1}\|^2\nonumber\\
&&+2\delta\theta \langle x_n-x_{n-1},x_{n-1}-x_{n-2}\rangle+\delta^2\|x_{n-1}-x_{n-2}\|^2\nonumber\\
&\geq& \|x_{n+1}-x_n\|^2-2\theta \|x_{n+1}-x_n\|\|x_n-x_{n-1}\|\nonumber\\
&&-2|\delta| \|x_{n+1}-x_n\|\|x_{n-1}-x_{n-2}\|+\theta^2\|x_n-x_{n-1}\|^2\nonumber\\
&&-2|\delta|\theta \|x_n-x_{n-1}\|\|x_{n-1}-x_{n-2}\|+\delta^2\|x_{n-1}-x_{n-2}\|^2\nonumber\\
&\geq& \|x_{n+1}-x_n\|^2-\theta \|x_{n+1}-x_n\|^2-\theta\|x_n-x_{n-1}\|^2\nonumber\\
&&-|\delta| \|x_{n+1}-x_n\|^2-|\delta|\|x_{n-1}-x_{n-2}\|^2+\theta^2\|x_n-x_{n-1}\|^2\nonumber\\
&&-|\delta|\theta \|x_n-x_{n-1}\|^2-|\delta|\theta\|x_{n-1}-x_{n-2}\|^2+\delta^2\|x_{n-1}-x_{n-2}\|^2\nonumber\\
&=&(1-|\delta|-\theta)\|x_{n+1}-x_n\|^2+(\theta^2-\theta-|\delta|\theta)\|x_n-x_{n-1}\|^2\nonumber\\
&&+(\delta^2-|\delta|-|\delta|\theta)\|x_{n-1}-x_{n-2}\|^2.
\end{eqnarray}
Since $T$ is $\beta$-averaged quasi-nonexpansive, we obtain
\begin{eqnarray}\label{YEK2}
	\|x_{n+1} - x^*\|^2 &=& \|(1-\rho)(y_n - x^*) + \rho(Ty_n - x^*)\|^2\nonumber\\
	&=& (1-\rho)\|y_n - x^*\|^2 + \rho\|Ty_n - x^*\|^2 - \rho(1-\rho)\|y_n - Ty_n\|^2\nonumber\\
	&\leq& (1-\rho)\|y_n - x^*\|^2 + \rho\left[\|y_n - x^*\|^2 - \frac{1-\beta}{\beta}\|y_n-Ty_n\|^2\right]\nonumber\\
	&&\;\;-\rho(1-\rho)\|y_n - Ty_n\|^2\nonumber\\
	&=& \|y_n - x^*\|^2 - \frac{(1-\beta)\rho}{\beta}\|y_n - Ty_n\|^2 - \rho(1-\rho)\|y_n - Ty_n\|^2\nonumber\\
	&=& \|y_n - x^*\|^2 - \left[\frac{(1-\beta)\rho}{\beta} + \rho(1-\rho)\right]\|y_n - Ty_n\|^2\nonumber\\
	&=& \|y_n - x^*\|^2 - \left[\frac{(1-\beta)\rho}{\beta} + \rho(1-\rho)\right]\|x_{n+1} - y_n\|^2\nonumber\\
&=& \|y_n - x^*\|^2 - \left[\frac{(1-\beta)}{\beta\rho} + \frac{1-\rho}{\rho}\right]\|x_{n+1} - y_n\|^2.
 \end{eqnarray}
Using \eqref{YEK} and \eqref{YEK1} in \eqref{YEK2}, we get
\begin{eqnarray*}
\|x_{n+1}  - x^*\|^2 &\leq& (1+\theta)\|x_n - x^*\|^2 - (\theta - \delta)\|x_{n-1} - x^*\|^2 - \delta\|x_{n-2} - x^*\|^2\\
&&\;\;+ (1+\theta)(\theta - \delta)\|x_n - x_{n-1}\|^2 + \delta(1+\theta)\|x_n - x_{n-2}\|^2\\
&&\;\;- \delta(\theta - \delta)\|x_{n-1} - x_{n-2}\|^2 -\left[\frac{(1-\beta)}{\beta\rho}
 + \frac{1-\rho}{\rho}\right](1-|\delta| - \theta)\|x_{n+1} - x_n\|^2\\
&&\;\; - \left[\frac{(1-\beta)}{\beta\rho} + \frac{1-\rho}{\rho}\right](\theta^2-\theta-|\delta|\theta)\|x_n-x_{n-1}\|^2\\
 &&\;\; - \left[\frac{(1-\beta)}{\beta\rho} + \frac{1-\rho}{\rho}\right](\delta^2 - |\delta|- |\delta|\theta)\|x_{n-1} - x_{n-2}\|^2\\
&=& (1+\theta)\|x_n - x^*\|^2 - (\theta - \delta)\|x_{n-1} - x^*\|^2 - \delta\|x_{n-2}  - x^*\|^2\\
&&\;\;+ \left[(1+\theta)(\theta - \delta) - \left(\frac{1-\beta}{\beta\rho} + \frac{(1-\rho)}{\rho}\right)(\theta^2 - \theta - |\delta|\theta)\right]\|x_n - x_{n-1}\|^2\nonumber\\
&&\;\;+ \delta(1+\theta)\|x_n - x_{n-2}\|^2 - \left(\frac{1-\beta}{\beta\rho} + \frac{(1-\rho)}{\rho}\right)(1-|\delta| - \theta)\|x_{n+1} - x_n\|^2\\
&&\;\;- \left[\delta(\theta - \delta) + \left(\frac{1-\beta}{\beta\rho} + \frac{(1-\rho)}{\rho}\right)(\delta^2 - |\delta|- |\delta|\theta)\right]\|x_{n-1} - x_{n-2}\|^2\\
&\leq& (1+\theta)\|x_n - x^*\|^2 - (\theta - \delta)\|x_{n-1} - x^*\|^2 - \delta\|x_{n-2} - x^*\|^2\\
&&\;\;+ \left[(1+\theta)(\theta - \delta) - \left(\frac{1-\beta}{\beta\rho} + \frac{(1-\rho)}{\rho}\right)(\theta^2 - \theta - |\delta|\theta)\right]\|x_n - x_{n-1}\|^2\\
&&\;\;
 - \left(\frac{1-\beta}{\beta\rho} - \frac{(1-\rho)}{\rho}\right)(1-|\delta|- \theta)\|x_{n+1} - x_n\|^2\\
&&\;\;-\left[\delta(\theta - \delta) + \left(\frac{1-\beta}{\beta\rho} + \frac{(1-\rho)}{\rho}\right)(\delta^2 - |\delta|- |\delta|\theta)\right]\|x_{n-1} - x_{n-2}\|^2
\end{eqnarray*}
Therefore,
\begin{eqnarray}\label{flavur}
&& \|x_{n+1} - x^*\|^2 - \theta\|x_n - x^*\|^2 - \delta\|x_{n-1} - x^*\|^2\nonumber\\
&&\;+ \left(\frac{1-\beta}{\beta\rho} + \frac{1-\rho}{\rho}\right)(1-|\delta|- \theta)\|x_{n+1} - x_n\|^2\nonumber\\
&&\leq \|x_n - x^*\|^2 - \theta\|x_{n-1} - x^*\|^2 - \delta\|x_{n-2} - x^*\|^2\nonumber\\
&&\;+ \left(\frac{1-\beta}{\beta\rho} + \frac{1-\rho}{\rho}\right)(1-|\delta| - \theta)\|x_n - x_{n-1}\|^2\nonumber\\
&&\;+ \left((\theta -\delta)(1+\theta) - \left(\frac{1-\beta}{\beta\rho} + \frac{1-\rho}{\rho}\right) (\theta^2 - 2\theta - |\delta|\theta - |\delta| + 1)\right)\|x_n - x_{n-1}\|^2\nonumber\\
&&\;-\left[\delta(\theta - \delta) + \left(\frac{1-\beta}{\beta\rho} + \frac{(1-\rho)}{\rho}\right)(\delta^2 - |\delta|- |\delta|\theta)\right]\|x_{n-1} - x_{n-2}\|^2.
\end{eqnarray}
For each $n\geq 1,$ define
\begin{eqnarray}\label{sheyi}
\Gamma_n &:=& \|x_n - x^*\|^2 - \theta\|x_{n-1} - x^*\|^2 - \delta\|x_{n-2} - x^*\|^2\nonumber\\
&&\;\;+ \left(\frac{1-\beta}{\beta\rho} + \frac{(1-\rho)}{\rho}\right)(1-|\delta|- \theta)\|x_n - x_{n-1}\|^2.
\end{eqnarray}
We first show that $\Gamma_n \geq 0, \ \forall n\geq 1.$ Note that
$$ \|x_{n-1} - x^*\|^2 \leq 2\|x_n - x_{n-1}\|^2 + 2\|x_n - x^*\|^2.$$
Hence,
\begin{eqnarray}\label{YEK3}
\Gamma_n &=& \|x_n - x^*\|^2 - \theta\|x_{n-1} - x^*\|^2  - \delta\|x_{n-2} - x^*\|^2\nonumber\\
&&\;\;+ \left(\frac{1-\beta}{\beta\rho} + \frac{(1-\rho)}{\rho}\right)(1-|\delta|- \theta)\|x_n - x_{n-1}\|^2\nonumber\\
&\geq& \|x_n - x^*\|^2 - 2\theta\|x_n - x_{n-1}\|^2 - 2\theta\|x_n - x^*\|^2\nonumber\\
&&\;\;- \delta\|x_{n-2} - x^*\|^2 + \left(\frac{1-\beta}{\beta\rho} + \frac{(1-\rho)}{\rho}\right)(1-|\delta|- \theta)\|x_n - x_{n-1}\|^2\nonumber\\
&=& (1-2\theta)\|x_n - x^*\|^2 + \left[\left(\frac{1-\beta}{\beta\rho} + \frac{(1-\rho)}{\rho}\right)(1-|\delta|-\theta) - 2\theta\right]\|x_n -x_{n-1}\|^2\nonumber\\
&&\;\;- \delta\|x_{n-2} - x^*\|^2.
\end{eqnarray}
By Condition \ref{CON} (i), (ii) and (iii), we obtain
\begin{eqnarray}\label{YEK4}
	|\delta| &<& 1-\theta - \frac{2\theta}{\left(\frac{1-\beta}{\beta\rho} + \frac{1-\rho}{\rho}\right)}\nonumber \\
             &=& \frac{1-\beta\rho-\theta-\beta\theta\rho}{1-\beta\rho}.
\end{eqnarray}
We then obtain from \eqref{YEK3} and \eqref{YEK4} that $\Gamma_n \geq 0, \ \ \forall n\geq 0.$ Consequently, we obtain from \eqref{flavur} that
\begin{eqnarray}\label{YEK2a}
&&\Gamma_{n+1} - \Gamma_n \leq \left((\theta -\delta)(1+\theta) - \left(\frac{1-\beta}{\beta\rho} + \frac{1-\rho}{\rho}\right)(\theta^2 - 2\theta - |\delta|\theta - |\delta| + 1)\right)\|x_n - x_{n-1}\|^2 \nonumber\\
	&&\;\;-\left[\delta(\theta -\delta)+\left(\frac{1-\beta}{\beta\rho} + \frac{1-\rho}{\rho}\right)(\delta^2 - |\delta| - |\delta|\theta)\right]\|x_{n-1} - x_{n-2}\|^2\nonumber\\
&&\;\;-\left((\theta -\delta)(1+\theta) - \left(\frac{1-\beta}{\beta\rho} + \frac{1-\rho}{\rho}\right)(\theta^2 - 2\theta - |\delta|\theta - |\delta| + 1)\right)(\|x_{n-1} - x_{n-2}\|^2\nonumber\\
&&\;\;-\|x_n - x_{n-1}\|^2) + ((\theta -\delta)(1+\theta) - \left(\frac{1-\beta}{\beta\rho} + \frac{1-\rho}{\rho}\right)(\theta^2 - 2\theta - |\delta|\theta - |\delta| + 1)\nonumber\\
&&-\delta(\theta -\delta)-\left(\frac{1-\beta}{\beta\rho} + \frac{1-\rho}{\rho}\right)(\delta^2 - |\delta| - |\delta|\theta))
\|x_{n-1} - x_{n-2}\|^2\nonumber\\
&=&c_1\left(\|x_{n-1} - x_{n-2}\|^2-\|x_n - x_{n-1}\|^2\right)-c_2\|x_{n-1} - x_{n-2}\|^2,
\end{eqnarray}
where
\begin{eqnarray*}
c_1:=-\left((\theta -\delta)(1+\theta) - \left(\frac{1-\beta}{\beta\rho} + \frac{1-\rho}{\rho}\right)(\theta^2 - 2\theta - |\delta|\theta - |\delta| + 1)\right)
\end{eqnarray*}
\noindent
and
\begin{eqnarray*}
&&c_2:=- \Big((\theta -\delta)(1+\theta) - \left(\frac{1-\beta}{\beta\rho} + \frac{1-\rho}{\rho}\right)(\theta^2 - 2\theta - |\delta|\theta - |\delta| + 1)\nonumber\\
&&-\delta(\theta -\delta)-\Big(\frac{1-\beta}{\beta\rho} + \frac{1-\rho}{\rho}\Big)(\delta^2 - |\delta| - |\delta|\theta)\Big).
\end{eqnarray*}
\noindent
Noting that $|\delta|=-\delta$ since $\delta\leq 0$, we then have that
\begin{eqnarray}\label{nige1}
c_1=-\left((\theta -\delta)(1+\theta) - \left(\frac{1-\beta}{\beta\rho} + \frac{1-\rho}{\rho}\right)(\theta^2 - 2\theta - |\delta|\theta - |\delta| + 1)\right)>0
\end{eqnarray}
\noindent 
which is equivalent to
\begin{eqnarray}\label{nige2}
\frac{\theta(1+\theta)-\left(\frac{1-\beta}{\beta\rho} + \frac{1-\rho}{\rho}\right)(1-\theta)^2}{(1+\theta)\left(1+\frac{1-\beta}{\beta\rho} + \frac{1-\rho}{\rho}\right)}<\delta.
\end{eqnarray}
By Condition \ref{CON} (iii), we see that \eqref{nige2} holds and thus $c_1>0$. 
Also,
\begin{eqnarray}\label{nige3}
&&c_2:=- \Big((\theta -\delta)(1+\theta) - \left(\frac{1-\beta}{\beta\rho} + \frac{1-\rho}{\rho}\right)(\theta^2 - 2\theta - |\delta|\theta - |\delta| + 1)\nonumber\\
&&-\delta(\theta -\delta)-\Big(\frac{1-\beta}{\beta\rho} + \frac{1-\rho}{\rho}\Big)(\delta^2 - |\delta| - |\delta|\theta)\Big)>0
\end{eqnarray}
\noindent
implies that
\begin{eqnarray}\label{nige4}
&&\theta(1+\theta)-\left(\frac{1-\beta}{\beta\rho} + \frac{1-\rho}{\rho}\right)(1-\theta)^2
<\left(1+\frac{1-\beta}{\beta\rho} + \frac{1-\rho}{\rho}\right)\delta(1+\theta)\nonumber\\
&&\;\;+\delta(\theta-\delta)+\left(\frac{1-\beta}{\beta\rho} + \frac{1-\rho}{\rho}\right)(\delta^2+\delta(1+\theta)).
\end{eqnarray}
By Condition \ref{CON} (iii), we have that the inequality \eqref{nige4} is satisfied. Therefore, $c_2>0$ from \eqref{nige3}.  
From \eqref{YEK2a}, we then obtain
\begin{eqnarray}\label{ade7}
\Gamma_{n+1}+c_1\|x_n-x_{n-1}\|^2&\leq& \Gamma_{n}+c_1\|x_{n-1}-x_{n-2}\|^2\nonumber \\
&&-c_2\|x_{n-1}-x_{n-2}\|^2.
\end{eqnarray}
Letting $\bar{\Gamma}_n:=\Gamma_n+c_1\|x_{n-1}-x_{n-2}\|^2$, we obtain from \eqref{ade7} that
\begin{equation}\label{afikun}
\bar{\Gamma}_{n+1} \leq \bar{\Gamma}_{n}.
\end{equation}
This implies from \eqref{afikun} that the sequence $\{\bar{\Gamma}_{n}\}$ is decreasing and thus $\underset{n\rightarrow \infty}\lim \bar{\Gamma}_{n}$ exists. Consequently, we have from \eqref{ade7} that
\begin{eqnarray}\label{ade9}
\underset{n\rightarrow \infty}\lim  c_2\|x_{n-1}-x_{n-2}\|^2=0.
	\end{eqnarray}
Hence,
\begin{eqnarray}\label{ade10}
\underset{n\rightarrow \infty}\lim \|x_{n-1}-x_{n-2}\|=0.
	\end{eqnarray}
Using \eqref{ade10} and existence of limit of $\{\bar{\Gamma}_{n}\}$, we have that
\begin{eqnarray}\label{omilomi2}
\underset{n\rightarrow \infty}\lim \Gamma_n&:=& \underset{n\rightarrow \infty}\lim\Big[\|x_n - x^*\|^2 - \theta\|x_{n-1} - x^*\|^2 - \delta\|x_{n-2} - x^*\|^2\nonumber\\
&&\;\;+ \left(\frac{1-\beta}{\beta\rho} + \frac{(1-\rho)}{\rho}\right)(1-|\delta|- \theta)\|x_n - x_{n-1}\|^2\Big]
\end{eqnarray}
exists. Also,
\begin{eqnarray*}
	\|x_{n+1} - y_n\| &=& \|x_{n+1} - x_n - \theta(x_n - x_{n-1}) - \delta(x_{n-1} - x_{n-2})\|\\
	&\leq& \|x_{n+1} - x_n\| + \theta\|x_n - x_{n-1}\| + |\delta|\|x_{n-1} - x_{n-2}\| \to 0, \ n\to \infty.
\end{eqnarray*}
So, we obtain
\begin{eqnarray}\label{disu3}
	\lim_{n\to \infty}\|y_n - Ty_n\|=0.
\end{eqnarray}
Noting that $Ty_n=y_n-w_n+z_n$ from our method \eqref{ALGG}, we have from \eqref{disu3} that
\begin{eqnarray}\label{disu4}
	\lim_{n\to \infty}\|w_n - z_n\|=0.
\end{eqnarray}
Since $x_{n+1} - y_n = \rho(Ty_n - y_n).$ Again, Note that
$$\|y_n - x_n\| \leq \theta\|x_n - x_{n-1}\| + |\delta|\|x_{n-1} - x_{n-2}\| \to 0, \ n\to \infty.$$
Since $\lim_{n\to \infty}\Gamma_n$ exists and $\lim_{n\to \infty}\|x_n - x_{n-1}\| = 0,$ we have from \eqref{YEK3} that $\{x_n\}$ is bounded.
\end{proof}

\begin{thm}\label{t31}
Assume ${\rm zer}(A+B+C)\neq\emptyset$. Let $\gamma\in(0,2\eta\epsilon)$, where $\epsilon \in (0,1)$. Let $\{x_n\}$ be generated by \eqref{ALGG} and Condition \ref{CON} be satisfied. Then the following hold:\\
$(i)~\{x_n\}$ weakly converges to a fixed point $z^*$ of $T$ and $J_{\gamma B}(z^*)$ solves inclusion problem \eqref{prob1};\\
$(ii)~\sum_{n=1}^{\infty}\|Cw_n-Cx^*\|^2< \infty$ for any $x^* \in {\rm zer}(A+B+C)$;\\
$(iii)~\{w_n\}$ and $\{z_n\}$ both converge weakly to $J_{\gamma B}(z^*) \in {\rm zer}(A+B+C)$;\\
$(iv)~\{w_n\}$ and $\{z_n\}$ converge strongly to a point in ${\rm zer}(A+B+C)$ if any of the following is satisfied:
\begin{itemize}
\item[{\rm (a)}] $A$ is uniformly monotone on every nonempty bounded subset of dom($A$) ($A$ is uniformly monotone if there exists
$\phi:\mathbb{R}_+\rightarrow [0,\infty]$ such that $\phi(0)=0$ and for all $u \in Ax, v\in Ay, \langle x-y,u-v\rangle \geq \phi(\|x-y\|)$.);
\item[{\rm (b)}] $B$ is uniformly monotone on every nonempty bounded subset of dom($B$);
\item[{\rm (c)}] $C$ is demiregular at every point $y\in{\rm zer}(A+B+C)$ ($C$ is demiregular at $x\in {\rm dom}(C)$ if for all $u \in Cx$ and for all $(x_n,u_n) \in {\rm gra}(C)$ with $x_n\rightharpoonup x,$ and $u_n\rightarrow u$, we have $x_n\rightarrow x$.
\end{itemize}

\end{thm}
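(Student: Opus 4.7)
\noindent The plan is to treat the four claims in order, leveraging the asymptotic regularity already established in Lemma \ref{lem31}. For \textbf{(i)}, I would apply Opial's lemma to $\{x_n\}$. The main missing ingredient is the existence of $\lim_n \|x_n - x^*\|$ for every $x^* \in F(T)$. To obtain it, note that the polarization identity
\begin{equation*}
\|x_n - x^*\|^2 - \|x_{n-1} - x^*\|^2 = \langle x_n - x_{n-1}, x_n + x_{n-1} - 2x^*\rangle,
\end{equation*}
together with $\|x_n - x_{n-1}\| \to 0$ and boundedness of $\{x_n\}$, forces consecutive differences of $\|x_n - x^*\|^2$ to vanish. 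Plugging this into the convergent sequence $\bar{\Gamma}_n$ from Lemma \ref{lem31} gives $(1-\theta-\delta)\|x_n - x^*\|^2 = \bar{\Gamma}_n + o(1)$, and the Condition \ref{CON} bounds imply $1-\theta-\delta>0$, so $\|x_n - x^*\|^2$ converges. Any weak cluster point of $\{x_n\}$ is also a weak cluster point of $\{y_n\}$ (since $\|y_n - x_n\| \to 0$) and hence lies in $F(T)$ by the demiclosedness of $I-T$ (Lemma \ref{lem4}) combined with $\|y_n - Ty_n\| \to 0$. Opial's lemma then delivers $x_n \rightharpoonup z^* \in F(T)$, and Proposition \ref{prop1} identifies $J_{\gamma B}(z^*) \in \mathrm{zer}(A+B+C)$.

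\noindent For \textbf{(ii)}, the plan is to rerun the energy argument of Lemma \ref{lem31} with the sharper inequality \eqref{sitdown} of Proposition \ref{prop1}, which is valid under the strengthened hypothesis $\gamma \in (0, 2\eta\epsilon)$ and carries the extra negative dissipation term $-\gamma(2\eta - \gamma/\epsilon)\|Cw_n - Cx^*\|^2$. Propagating this term through the averaged-operator estimate \eqref{YEK2} and the telescoping ending at \eqref{afikun} upgrades the recursion to
\begin{equation*}
\bar{\Gamma}_{n+1} \leq \bar{\Gamma}_n - c_2\|x_{n-1} - x_{n-2}\|^2 - \rho\gamma\bigl(2\eta - \tfrac{\gamma}{\epsilon}\bigr)\|Cw_n - Cx^*\|^2.
\end{equation*}
Summing and using the boundedness below of $\bar{\Gamma}_n$ yields $\sum_{n\geq 1}\|Cw_n - Cx^*\|^2 < \infty$; in particular $Cw_n \to Cx^*$ strongly.

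\noindent \textbf{(iii)} is the most delicate step and the main obstacle. From $\|y_n - Ty_n\| \to 0$ and the identity $Ty_n = y_n - w_n + z_n$ one gets $\|w_n - z_n\| \to 0$, so $\{w_n\}$ and $\{z_n\}$ share weak cluster points. Writing $u_n := (y_n - w_n)/\gamma \in Bw_n$ and $v_n := (2w_n - y_n - \gamma Cw_n - z_n)/\gamma \in Az_n$, one has $u_n + v_n + Cw_n = (w_n - z_n)/\gamma \to 0$ strongly. For any weak cluster point $q$ along a subsequence $w_{n_k} \rightharpoonup q$, also $z_{n_k} \rightharpoonup q$, $y_{n_k} \rightharpoonup z^*$ (from (i)), and $Cw_{n_k} \to CJ_{\gamma B}(z^*) =: Cp^*$ strongly (from (ii)). Summing the three monotonicity inequalities $\langle w_n - p, u_n - b\rangle + \langle z_n - p, v_n - a\rangle + \langle w_n - p, Cw_n - Cp\rangle \geq 0$ against an arbitrary test triple $(p,a,b)$ with $a \in Ap$, $b \in Bp$ and letting $k\to\infty$, the cross terms telescope (using $\epsilon_n:=w_n-z_n\to 0$ strongly and $u_n+v_n+Cw_n = \epsilon_n/\gamma$) to $\langle p - q, a + b + Cp\rangle \geq 0$; maximality of $A+B+C$ then forces $q \in \mathrm{zer}(A+B+C)$. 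The hard part is identifying $q$ with the specific solution $p^* := J_{\gamma B}(z^*)$; here I would combine the derived identity $\langle w_n - p^*, u_n - b^*\rangle \to 0$, where $b^* := (z^*-p^*)/\gamma \in Bp^*$ (obtained by expanding the firm-nonexpansiveness of $J_{\gamma B}$ and using (ii)), with the Davis--Yin weak-convergence framework for shadow sequences \cite{DaY} to pin down $q = p^*$.

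\noindent \textbf{(iv)} follows once (iii) is in hand. If $A$ is uniformly monotone on bounded sets with modulus $\phi$, then $\phi(\|z_n - p^*\|) \leq \langle z_n - p^*, v_n - a^*\rangle$ (with $a^* \in Ap^*$ chosen so that $a^* + b^* + Cp^* = 0$), and the right-hand side vanishes by the Minty-style computation underpinning part (iii); hence $z_n \to p^*$ strongly, and with $\|w_n - z_n\| \to 0$ also $w_n \to p^*$ strongly. The case where $B$ is uniformly monotone is strictly analogous, using $\langle w_n - p^*, u_n - b^*\rangle \to 0$. Finally, if $C$ is demiregular at $p^*$, then $w_n \rightharpoonup p^*$ from (iii) combined with $Cw_n \to Cp^*$ strongly from (ii) yields $w_n \to p^*$ strongly by the defining property of demiregularity, and $z_n \to p^*$ follows.
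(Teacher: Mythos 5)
Parts (i), (ii) and (iv)(a)--(b) are essentially sound. Your route to $\lim_n\|x_n-x^*\|$ in (i) --- noting that $\|x_n-x^*\|^2-\|x_{n-1}-x^*\|^2=\langle x_n-x_{n-1},x_n+x_{n-1}-2x^*\rangle\to 0$ by \eqref{ade10} and boundedness, so that $\Gamma_n=(1-\theta-\delta)\|x_n-x^*\|^2+o(1)$ and convergence of $\Gamma_n$ transfers to $\|x_n-x^*\|^2$ --- is valid and in fact tidier than the paper's, which never establishes $\lim_n\|x_n-x^*\|$ but instead runs the Opial-type uniqueness argument on the combined quantity $\|x_n-x^*\|^2-\theta\|x_{n-1}-x^*\|^2-\delta\|x_{n-2}-x^*\|^2$ by expanding $2\langle x_n-\theta x_{n-1}-\delta x_{n-2},x^*-v^*\rangle$ and using $1-\theta-\delta>0$. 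Part (ii) is the paper's argument repackaged as a modified Lyapunov recursion; note only that the telescoped bound also needs $\sum_k\|x_k-x_{k-1}\|^2<\infty$, which does follow from summing \eqref{ade7}.

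The genuine gap is in (iii), exactly where you flag it. The device you propose for identifying the weak cluster point $q$ of $\{w_n\}$ with $p^*=J_{\gamma B}(z^*)$, namely $\langle w_n-p^*,u_n-b^*\rangle\to 0$, cannot close the argument: both factors converge only weakly ($w_{n_k}\rightharpoonup q$ and $u_{n_k}\rightharpoonup\gamma^{-1}(z^*-q)$), so you cannot pass to the limit in the inner product, and a vanishing pairing of two weakly convergent sequences is perfectly consistent with $q\neq p^*$; deferring the rest to ``the Davis--Yin framework'' leaves the step unproved. The paper closes it by applying \cite[Proposition 25.5]{Bauschkebook} to the triples $(z_{n_k},v_{n_k})\in{\rm gra}\,A$, $(w_{n_k},u_{n_k})\in{\rm gra}\,B$, $(w_{n_k},Cw_{n_k})\in{\rm gra}\,C$, using $u_{n_k}+v_{n_k}+Cw_{n_k}=\gamma^{-1}(w_{n_k}-z_{n_k})\to 0$ strongly, $Cw_{n_k}\to Cz$ strongly (from (ii) together with the weak-to-strong sequential closedness of the continuous monotone $C$), and the weak limits $u_{n_k}\rightharpoonup\gamma^{-1}(z^*-z)$, $v_{n_k}\rightharpoonup\gamma^{-1}(z-z^*-\gamma Cz)$: that proposition yields not merely $z\in{\rm zer}(A+B+C)$ but the individual inclusions $z^*-z\in\gamma Bz$ and $z-z^*-\gamma Cz\in\gamma Az$, and the first of these is exactly $z=J_{\gamma B}(z^*)$, whence the weak cluster point is unique and $\{w_n\}$, $\{z_n\}$ converge weakly. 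You need this sharper demiclosedness statement (or an equivalent argument); without it, (iii) and consequently the demiregularity case (iv)(c), which uses $w_n\rightharpoonup p^*$ together with $Cw_n\to Cp^*$, remain open.
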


\begin{proof}
(i) Using \eqref{ade10} in \eqref{omilomi2}, we have that
\begin{equation}\label{disu8}
\lim_{n\to \infty} \Big[\|x_n - x^*\|^2 - \theta\|x_{n-1} - x^*\|^2 - \delta\|x_{n-2} - x^*\|^2\Big]
\end{equation}
exists. By Lemma \ref{lem31}, we have that $\{x_n\}$ is bounded. We first show that any weak cluster point of $\{x_n\}$ is in $F(T)$, where $T$ is as defined in \eqref{pb1}. Suppose $\{x_{n_k}\}\subset \{x_n\} $ such that $x_{n_k}\rightharpoonup v^* \in H$. Since $\|y_n - x_n\| \to 0, \ n\to \infty$, we have $y_{n_k}\rightharpoonup v^* \in H$. By the result that $\|y_n - Ty_n\|\to 0, \ n\to \infty$ obtained in Lemma \ref{lem31}, we have that $v^* \in F(T)$.\\

\noindent We now show that $x_n\rightharpoonup x^* \in F(T)$. Let us assume that there exist $\{x_{n_k}\}\subset \{x_n\} $ and $\{x_{n_j}\}\subset \{x_n\} $ such that $x_{n_k}\rightharpoonup v^*, k \rightarrow \infty$ and $x_{n_j}\rightharpoonup x^*, j \rightarrow \infty$. We show that $v^*=x^*$.\\

\noindent Observe that
\begin{equation}\label{away1}
2\langle x_n,x^*-v^*\rangle =\|x_n-v^*\|^2-\|x_n-x^*\|^2-\|v^*\|^2+\|x^*\|^2,
\end{equation}

\begin{eqnarray}\label{away3}
2\langle -\theta x_{n-1},x^*-v^*\rangle &=&-\theta\|x_{n-1}-v^*\|^2+\theta\|x_{n-1}-x^*\|^2\nonumber \\
&&+\theta\|v^*\|^2-\theta\|x^*\|^2
\end{eqnarray}
and
\begin{eqnarray}\label{away3a}
2\langle -\delta x_{n-2},x^*-v^*\rangle &=&-\delta\|x_{n-2}-v^*\|^2+\delta\|x_{n-2}-x^*\|^2\nonumber \\
&&+\delta\|v^*\|^2-\delta\|x^*\|^2.
\end{eqnarray}
Addition of \eqref{away1}, \eqref{away3} and \eqref{away3a} gives
\begin{eqnarray*}
2\langle x_n-\theta x_{n-1}-\delta x_{n-2},x^*-v^*\rangle&=& \Big(\|x_n-v^*\|^2-\theta \|x_{n-1}-v^*\|^2-\delta \|x_{n-2}-v^*\|^2 \Big) \\
  &&-\Big(\|x_n-x^*\|^2-\theta \|x_{n-1}-x^*\|^2-\delta \|x_{n-2}-x^*\|^2 \Big)\\
  &&+(1-\theta-\delta)(\|x^*\|^2-\|v^*\|^2).
\end{eqnarray*}
According to \eqref{omilomi2}, we have
\begin{eqnarray*}
\underset{n\rightarrow \infty}\lim \Big[\|x_n-x^*\|^2 -\theta\|x_{n-1}-x^*\|^2-\delta \|x_{n-2}-x^*\|^2\Big]
\end{eqnarray*}
exists and
\begin{eqnarray*}
\underset{n\rightarrow \infty}\lim \Big[\|x_n-v^*\|^2 -\theta\|x_{n-1}-v^*\|^2-\delta \|x_{n-2}-v^*\|^2 \Big]
\end{eqnarray*}
exists. This implies that
$$
\underset{n\rightarrow \infty}\lim \langle x_n-\theta x_{n-1}-\delta x_{n-2},x^*-v^*\rangle
$$
\noindent exists. Now,
\begin{eqnarray*}
 \langle v^*-\theta v^*-\delta v^*,x^*-v^*\rangle&=& \underset{k\rightarrow \infty}\lim \langle x_{n_k}-\theta x_{n_k-1}-\delta x_{n_k-2},x^*-v^*\rangle  \\
  &=& \underset{n\rightarrow \infty}\lim \langle x_n-\theta x_{n-1}-\delta x_{n-2},x^*-v^*\rangle \\
&=& \underset{j\rightarrow \infty}\lim \langle x_{n_j}-\theta x_{n_j-1}-\delta x_{n_j-2},x^*-v^*\rangle  \\
&=& \langle x^*-\theta x^*-\delta x^*,x^*-v^*\rangle,
\end{eqnarray*}
and this yields
$$
(1-\theta-\delta)\|x^*-v^*\|^2=0.
$$
\noindent Since $\delta \leq 0< 1-\theta$, we obtain that $x^*=v^*$. Hence, $\{x_n\}$ converges weakly to a point in $F(T)$.\\

\noindent
(ii) Observe from \eqref{ALG} that for any $z^* \in F(T)$,
\begin{eqnarray}\label{radio1}
\|x_{n+1} - z^*\|^2 = (1-\rho)\|y_n - z^*\|^2 + \rho\|Ty_n - z^*\|^2 - \rho(1-\rho)\|y_n - Ty_n\|^2
\end{eqnarray}
and by Proposition \ref{prop1} gives
\begin{eqnarray}\label{radio2}
\|Ty_n - z^*\|^2 &\leq& \|y_n - z^*\|^2 - \frac{1-\beta}{\beta}\|y_n-Ty_n\|^2\nonumber \\
&& - \gamma\Big(2\eta-\frac{\gamma}{\epsilon}\Big)\|Cw_n-CJ_{\gamma B}(z^*)\|^2.
\end{eqnarray}
Combining \eqref{radio1} and \eqref{radio2} above, we get
\begin{eqnarray*}
\|x_{n+1} - z^*\|^2 &=& (1-\rho)\|y_n - z^*\|^2 + \rho\|Ty_n - z^*\|^2\nonumber \\
&& - \rho(1-\rho)\|y_n - Ty_n\|^2 \nonumber \\
&\leq& (1-\rho)\|y_n - z^*\|^2+\rho\|y_n - z^*\|^2-\frac{\rho(1-\beta)}{\beta}\|y_n-Ty_n\|^2\nonumber \\
&& - \rho\gamma\Big(2\eta-\frac{\gamma}{\epsilon}\Big)\|Cw_n-CJ_{\gamma B}(z^*)\|^2- \rho(1-\rho)\|y_n - Ty_n\|^2\nonumber \\
&=& \|y_n - z^*\|^2-\frac{\rho(1-\beta)}{\beta}\|y_n-Ty_n\|^2\nonumber \\
&& - \rho\gamma\Big(2\eta-\frac{\gamma}{\epsilon}\Big)\|Cw_n-CJ_{\gamma B}(z^*)\|^2- \rho(1-\rho)\|y_n - Ty_n\|^2.
\end{eqnarray*}
So,
\begin{eqnarray*}
&&\|x_{n+1} - z^*\|^2 +\Big[\frac{\rho(1-\beta)}{\beta}+ \rho(1-\rho)\Big]\|y_n - Ty_n\|^2\nonumber \\
&&+\rho\gamma\Big(2\eta-\frac{\gamma}{\epsilon}\Big)\|Cw_n-CJ_{\gamma B}(z^*)\|^2\nonumber \\
&\leq& \|y_n - z^*\|^2 \nonumber \\
&=& \|x_n + \theta(x_n - x_{n-1}) + \delta(x_{n-1} - x_{n-2}) - z^*\|^2 \nonumber \\
&=& (1+\theta)\|x_n - z^*\|^2 - (\theta - \delta)\|x_{n-1} - z^*\|^2 - \delta\|x_{n-2} - z^*\|^2\nonumber\\
&&+ (1+\theta)(\theta - \delta)\|x_n - x_{n-1}\|^2 + \delta(1-\theta)\|x_n - x_{n-2}\|^2\nonumber\\
&&-\delta(\theta - \delta)\|x_{n-1} - x_{n-2}\|^2.
\end{eqnarray*}
Therefore,

\begin{eqnarray*}
&&\rho\gamma\Big(2\eta-\frac{\gamma}{\epsilon}\Big)\|Cw_n-CJ_{\gamma B}(z^*)\|^2\leq \|x_n - z^*\|^2 \nonumber \\
&&-\|x_{n+1} - z^*\|^2+\theta\Big(\|x_n - z^*\|^2-\|x_{n-1} - z^*\|^2 \Big)+\delta\Big(\|x_{n-1} - z^*\|^2-\|x_{n-2} - z^*\|^2 \Big)\nonumber \\
&&+ (1+\theta)(\theta - \delta)\|x_n - x_{n-1}\|^2 + \delta(1-\theta)\|x_n - x_{n-2}\|^2\nonumber\\
&&-\delta(\theta - \delta)\|x_{n-1} - x_{n-2}\|^2\nonumber\\
&\leq& \|x_n - z^*\|^2 -\|x_{n+1} - z^*\|^2+\theta\Big(\|x_n - z^*\|^2-\|x_{n-1} - z^*\|^2 \Big)\nonumber\\
&&-\delta\Big(\|x_{n-2} - z^*\|^2-\|x_{n-1} - z^*\|^2\Big)+ (1+\theta)(\theta - \delta)\|x_n - x_{n-1}\|^2\nonumber\\
&&-\delta(\theta - \delta)\|x_{n-1} - x_{n-2}\|^2.
\end{eqnarray*}
Summing, we obtain
\begin{eqnarray}\label{disu9}
&&\rho\gamma\Big(2\eta-\frac{\gamma}{\epsilon}\Big)\sum_{k=1}^{n}  \|Cw_k-CJ_{\gamma B}(z^*)\|^2\leq \|x_1 - z^*\|^2 \nonumber \\
&&-\theta\|x_0 - z^*\|^2-\delta\|x_{-1} - z^*\|^2-\|x_{n+1} - z^*\|^2\nonumber \\
&&+\theta\|x_n - z^*\|^2+\delta\|x_{n-1} - z^*\|^2+(1+\theta)(\theta - \delta)\sum_{k=1}^{n} \|x_k - x_{k-1}\|^2\nonumber\\
&&-\delta(\theta - \delta)\sum_{k=1}^{n}\|x_{k-1} - x_{k-2}\|^2\nonumber\\
&=&\|x_1 - z^*\|^2 -\theta\|x_0 - z^*\|^2-\delta\|x_{-1} - z^*\|^2\nonumber\\
&&-\Big(\|x_{n+1} - z^*\|^2-\theta\|x_n - z^*\|^2-\delta\|x_{n-1} - z^*\|^2 \Big)\nonumber\\
&&+(1+\theta)(\theta - \delta)\sum_{k=1}^{n} \|x_k - x_{k-1}\|^2\nonumber\\
&&-\delta(\theta - \delta)\sum_{k=1}^{n}\|x_{k-1} - x_{k-2}\|^2.
\end{eqnarray}
Using \eqref{ade10} and \eqref{disu8} in \eqref{disu9}, we obtain
$$
\sum_{n=1}^{\infty}\|Cw_n-CJ_{\gamma B}(z^*)\|^2< \infty.
$$
\noindent This completes (ii).\\

\noindent
(iii)
Since $J_{\gamma B}$ is nonexpansive, we have
\begin{eqnarray*}
\|w_n-J_{\gamma B}(z^*)\| &=& \|J_{\gamma B}(y_n)-J_{\gamma B}(z^*)\| \\
  &\leq& \|y_n-z^*\| = \|(1+\theta)(x_n - z^*) - (\theta - \delta)(x_{n-1} - z^*) - \delta(x_{n-2} - z^*)\|\\
  &\leq& (1+\theta)\|x_n - z^*\|+|\theta - \delta| \|x_{n-1} - z^*\|+|\delta| \|x_{n-2} - z^*\|.
\end{eqnarray*}
By the boundedness of $\{x_n\}$ in Lemma \ref{lem31}, we then have that $\{w_n\}$ is bounded . Now, suppose $z$ is the sequential weak cluster point of  $\{w_n\}$. Then there exists $\{w_{n_k}\} \subset \{w_n\}$ such that $w_{n_k} \rightharpoonup z, k\rightarrow \infty$. Pick $x^* \in {\rm zer}(A+B+C)$ and since $C$ is maximal monotone, $Cw_n\rightarrow Cx^*$ and $w_{n_k} \rightharpoonup z$, it follows from the weak-to-strong sequential closedness of $C$ that $Cz=Cx^*$ by \cite[Proposition 20.33 (ii)]{Bauschkebook} and so $Cw_{n_k} \rightarrow Cz$. \\

\noindent Denote $u_n:=\frac{1}{\gamma}(y_n-w_n) \in Bw_n$, and $v_n:=\frac{1}{\gamma}(2w_n-y_n-\gamma Cw_n-z_n) \in Az_n$. \\

\noindent Then we get
$w_{n_k} \rightharpoonup z$, $z_{n_k} \rightharpoonup z$, $Cw_{n_k} \rightarrow Cz$, $y_{n_k} \rightharpoonup z^*$, $u_{n_k} \rightharpoonup \frac{1}{\gamma}(z^*-z)$ and $v_{n_k} \rightharpoonup \frac{1}{\gamma}(z-z^*-\gamma Cz)$. Applying \cite[Proposition 25.5]{Bauschkebook} to
$(z_{n_k},v_{n_k}) \in {\rm gra} A$, $(y_{n_k},u_{n_k}) \in B$ and $(w_{n_k},Cw_{n_k}) \in C$ shows that $z \in  {\rm zer}(A+B+C)$, $z^*-z \in \gamma Bz$ and $z-z^*-\gamma Cz \in \gamma Az$. Hence, we have shown that $z=J_{\gamma B}(z^*)$ and so $z$ is unique weak cluster point of $\{w_n\}$. Therefore, $\{w_n\}$ weakly converges to $J_{\gamma B}(z^*)$ by \cite[Lemma 2.38]{Bauschkebook}.  Furthermore,  $\{z_n\}$ weakly converges to $J_{\gamma B}(z^*)$ since $\{w_n\}$ weakly converges to $J_{\gamma B}(z^*)$ and $w_n-z_n\rightarrow 0, n\rightarrow \infty$.\\

\noindent
(iv) Let $x^*=J_{\gamma B}(z^*), u^*=\frac{1}{\gamma}(z^*-x^*) \in Bx^*$ and $v^*=\frac{1}{\gamma}(x^*-z^*)-Cx^* \in Ax^*$. \\

\noindent (a) Since $B+C$ is monotone and $(y_n,u_n) \in {\rm gra} B$, we get
$$
\langle w_n-x^*,u_n+Cw_n-(u^*+Cx^*) \rangle \geq 0, \forall n \geq 1.
$$
 \noindent Consider the set $Z:=\{x^*\}\cup \{z_n\}$. Then there exists $\Phi_A:\mathbb{R}_+\rightarrow [0,\infty]$ that vanishes only at 0 such that
\begin{eqnarray*}
\gamma \Phi_A(\|z_n-x^*\|)&\leq & \gamma \langle z_n-x^*,v_n-v^* \rangle\nonumber \\
&&+\gamma \langle w_n-x^*,u_n+Cw_n-(u^*+Cx^*) \rangle\nonumber \\
&=&\gamma \langle z_n-w_n,v_n-v^*\rangle+ \gamma \langle z_n-x^*,v_n-v^*\rangle\nonumber \\
&&+\gamma \langle w_n-x^*,u_n+Cw_n-(u^*+Cx^*) \rangle\nonumber \\
&=&\gamma \langle z_n-w_n,v_n-v^*\rangle+ \gamma \langle w_n-x^*,v_n+u_n+Cw_n\rangle\nonumber \\
&=& \langle w_n-z_n,w_n-\gamma v_n-(x^*-\gamma v^*)\rangle\nonumber \\
&=& \langle w_n-z_n,y_n-z^*\rangle + \gamma \langle w_n-z_n,Cw_n-Cx^*\rangle\nonumber \\
&&- \|w_n-z_n\|^2 \nonumber \\
&\leq& \langle w_n-z_n,y_n-z^*\rangle + \gamma \langle w_n-z_n,Cw_n-Cx^*\rangle\rightarrow 0, n\rightarrow \infty,
\end{eqnarray*}
 since $w_n-z_n=y_n-Ty_n\rightarrow 0, n\rightarrow \infty, y_n\rightharpoonup z^*$ and $Cw_n\rightarrow Cx^*, n\rightarrow \infty$. Also,
 $w_n\rightarrow x^*$ since $z_n-w_n\rightarrow 0, n\rightarrow \infty$. \\

\noindent (b) Since $A$ is monotone, we get $\langle z_n-x^*,v_n-v^* \rangle \geq 0, n \geq 1$. Observe also that $B+C$ is uniformly monotone on all bounded sets. Let us consider the bounded set $Z:=\{x^*\} \cup \{w_n \}$. Then there exists an increasing function $\Phi_B:\mathbb{R}_+\rightarrow [0,\infty]$ that vanishes only at 0 such that
\begin{eqnarray*}
 \gamma\Phi_B(\|w_n-x^*\|)&\leq & \gamma \langle z_n-x^*,v_n-v^* \rangle\nonumber \\
&&+\gamma \langle w_n-x^*,u_n+Cw_n-(u^*+Cx^*) \rangle\rightarrow 0, n\rightarrow \infty,
\end{eqnarray*}
 by part (a) above. Hence we get $w_n\rightarrow x^*, n\rightarrow \infty$.\\

\noindent (c) Observe that $Cw_n\rightarrow Cx^*$ and $w_n \rightharpoonup x^*$. Then by demiregularity of $C$, we get  $w_n\rightarrow x^*, n\rightarrow \infty$.

\end{proof}

\noindent We obtain the following deduced corollaries from our main results in Theorem \ref{t31}.

\begin{cor}
Suppose $A:H\rightarrow 2^H$ is a set-valued maximal monotone operator and $C:H\rightarrow H$ is a $\eta-$cocoercive mapping. Assume that ${\rm zer}(A+C)\neq\emptyset$. Let $\gamma\in(0,2\eta\epsilon)$, where $\epsilon \in (0,1)$  and choose $x_{-1},x_0,x_1 \in H$ with Condition \ref{CON} satisfied and generate $\{x_n\}\subset H$ by
\begin{eqnarray}\label{load1}
\begin{cases}
w_n = x_n + \theta(x_n - x_{n-1}) + \delta(x_{n-1} - x_{n-2})\\
x_{n+1} = (1- \rho) w_n + \rho J_{\gamma A}(w_n -\gamma Cw_n).
\end{cases}
\end{eqnarray}
Then both $\{w_n\}$ and $\{x_n\}$ converge weakly to $z^* \in {\rm zer}(A+C)$ under Condition \ref{CON}. Furthermore, $\displaystyle\sum_{n=1}^{\infty}\|Cw_n-Cz^*\|^2< \infty$ for any $z^* \in {\rm zer}(A+C)$.
\end{cor}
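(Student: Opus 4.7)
The plan is to obtain this corollary as an immediate specialization of Theorem \ref{t31} with $B \equiv 0$. First, I would note that the zero operator is trivially maximal monotone, so Assumption \ref{assum1} holds for the triple $(A, 0, C)$ whenever it holds for $(A, C)$; in particular ${\rm zer}(A + 0 + C) = {\rm zer}(A + C) \neq \emptyset$ by hypothesis. Moreover $J_{\gamma \cdot 0} = I$, so the constant $\beta = \frac{2\eta}{4\eta - \gamma}$ in (A2) is unchanged, and Condition \ref{CON}, which involves only $\beta$, $\theta$, $\delta$, and $\rho$, transfers verbatim.

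Next I would verify that algorithm \eqref{load1} is precisely \eqref{ALGG} applied to $(A, 0, C)$. With $B \equiv 0$, the inner iterate named $w_n$ in \eqref{ALGG} equals $y_n$ (since $J_{\gamma B} = I$), so $z_n = J_{\gamma A}(2 y_n - y_n - \gamma C y_n) = J_{\gamma A}(y_n - \gamma C y_n)$, and the update collapses to
\[
x_{n+1} = y_n - \rho y_n + \rho z_n = (1 - \rho)\, y_n + \rho\, J_{\gamma A}(y_n - \gamma C y_n).
\]
Renaming $y_n$ as $w_n$, as in the statement of the corollary, recovers \eqref{load1} exactly; thus the $w_n$ of the corollary plays the dual role of both $y_n$ and $w_n$ in the parent scheme.

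With the schemes identified, I would invoke Theorem \ref{t31} directly. Part (i) gives weak convergence of $\{x_n\}$ to a fixed point $z^*$ of the operator $T$ from \eqref{pb1}, which now reduces to $T = J_{\gamma A}(I - \gamma C)$; since $J_{\gamma B}(z^*) = z^*$, we obtain $z^* \in {\rm zer}(A + C)$. Part (iii) yields weak convergence of the corollary's $\{w_n\}$ to $z^*$, and part (ii) delivers $\sum_{n=1}^{\infty} \|C w_n - C z^*\|^2 < \infty$. The only subtlety, and the closest thing to an obstacle, is the symbol bookkeeping described above, together with checking that specializing to $B \equiv 0$ leaves every constant and inequality used in Theorem \ref{t31} intact, which a quick inspection confirms.
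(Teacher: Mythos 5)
Your proposal is correct and matches the paper's (implicit) argument exactly: the paper presents this corollary as a direct deduction from Theorem \ref{t31} by taking $B\equiv 0$, so that $J_{\gamma B}=I$, the operator $T$ collapses to $J_{\gamma A}(I-\gamma C)$, and scheme \eqref{ALGG} reduces to \eqref{load1} after the renaming you describe. Your bookkeeping of the roles of $y_n$ and $w_n$ and the observation that $J_{\gamma B}(z^*)=z^*$ are precisely the points one needs to check.
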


\section{Numerical Implementations}\label{Sec5}
\noindent This section is focused on numerical implementations of our proposed algorithms. We apply our method \eqref{ALGG} to solve the Least Absolute Selection and Shrinkage Operator (LASSO) problem \cite{LASSO} in compressed sensing, Smoothly Clipped Absolute Deviation (SCAD) penalty problem \cite{SCAD} and image restoration problem. Furthermore, we compare the performance of our scheme with some other related schemes in \cite{Aragon-Artacho,Beck,CevVu,DaY,enyi,ModOli} which have been proposed previously in the literature.

\begin{exm}
\subsubsection*{Image Restoration Problem}\label{Sec:LASSO}
Let us consider the following optimization problem (\cite{Beck}):

\begin{equation}\label{linprob}
\displaystyle\min_{u\in \R^N}\{f(u)+g(u)\}
\end{equation}
under the following assumptions for the functions $f$ and $g$ (\cite{Beck,YekNiFe}):

\begin{asm}
$\,$
\begin{itemize}
\item[{\rm (i)}] $f:\R^N\to \R$ is a convex and continuous function that is not necessarily smooth.
\item[{\rm (ii)}] $g:\R^N \to \R$  is Fr\'{e}chet differentiable and the gradient $\nabla g$ is  L-Lipschitz continuous..
\end{itemize}
\end{asm}
\noindent
Setting $A:=\partial f, \ B := 0$ and $C:=\nabla g$, we have that problem \eqref{linprob} is a special case of \eqref{prob1} and thus we can apply
\eqref{load1}, which is a special case of our proposed method \eqref{ALGG}.\\

\noindent Let $D\in\R^{M\times N}$, $b\in\R^M$ and $\gamma>0$. We state the $l_1$-norm regularization least squares model as

\begin{equation}\label{lsqm}
\displaystyle\min_{u\in\R^N}\left\{\frac{1}{2}\|Du-b\|_2^2 +\gamma\|u\|_1 \right\},
\end{equation}
where $\gamma>0$, $x\in \mathbb{R}^N$ is the original image to be recovered, $b\in \mathbb{R}^M$ is the observed image and $D: \mathbb{R}^N\to \mathbb{R}^M$ is the blurring operator. In order to solve the above problem  via numerical computations, we consider the $256 \times 256$ Cameraman Image, the $128 \times 128$ Medical Resonance Imaging (MRI) and $128 \times 128$ Pout image, all obtained from the MATLAB Image Processing Toolbox. Moreover, we use the Gaussian blur of size $9\times 9$ and standard deviation $\sigma=4$ to create the blurred and noisy image (observed image). Also, we measure the quality of the restored image using the signal-to-noise ratio defined by
\begin{equation}
	\mbox{SNR} = 20 \times \log_{10}\left(\frac{\|x\|_2}{\|x-x^*\|_2}\right), \nonumber
\end{equation}
where $x$ is the original image and $x^*$ is the restored image.
Note that the larger the SNR, the better the quality of the restored image. We also choose the initial values as $x_{-1} = 0.01 \times\textbf{1} \in \mathbb{R}^{N\times N}, x_0 = \textbf{0} \in \mathbb{R}^{N\times N}$ and $x_1 = \textbf{1} \in \mathbb{R}^{N\times M}.$ We compare the performance of proposed Algorithm \eqref{ALGG} with the methods proposed by Artacho \& Belen Alg. in \cite{Aragon-Artacho}, Beck \& Teboulle Alg. in \cite{Beck} and Damek \& Yin Alg. in \cite{DaY} after 1,000 iterations. The parameters for each method for comparison are presented in Table \ref{tab1} below.

\begin{table}[H]
\caption{Methods Parameters for Example \ref{Sec:LASSO}}
\centering
\begin{tabular}{c c c c c c c c c c c c c c}
 \toprule[1.5pt] \\
Proposed Alg. \ref{ALGG} & $\displaystyle \theta = 0.49$ & $\displaystyle \delta = -0.01$  & $\displaystyle \eta = \frac{1}{||D^*D||}$ & $\displaystyle \gamma = \eta$\\
\\
 & $\displaystyle \beta = \frac{2\gamma}{4\eta - \gamma}$ & $\displaystyle \rho = \frac{0.7}{\beta}$\\
\toprule[1.5pt]\\
Artacho \& Belen Alg. & $\displaystyle \eta = \frac{1}{||D^*D||}$ & $\displaystyle \gamma = 3.9\eta$ & $\displaystyle \beta = 2 - \frac{\gamma}{2\eta}$ & $\displaystyle \lambda = 0.7\beta$\\
\toprule[1.5pt] \\
Beck \& Teboulle Alg. & $\displaystyle t_n = \frac{1}{n + 1}$ & $\displaystyle \eta = \frac{1}{||D^*D||}$ & $\displaystyle \gamma = \eta$ \\
\toprule[1.5pt] \\
Damek \& Yin Alg. & $\displaystyle \eta = \frac{1}{||D^*D||}$ & $\displaystyle \gamma = \eta$ & $\displaystyle \beta = \frac{2\gamma}{4\eta - \gamma}$ & $\displaystyle \lambda = \frac{0.7}{\beta}$\\
\toprule[1.5pt]
\end{tabular}
\label{tab1}
\end{table}

\begin{table}[H]
\caption{Comparison of our proposed Algorithm \ref{ALGG} with other algorithms for the Image Restoration problem.}
\centering 
\begin{tabular}{c c c c c c c c c c c} 
\toprule[1.5pt]
 & \multicolumn{2}{c}{Cameraman} & &  \multicolumn{2}{c}{MRI} & &  \multicolumn{2}{c}{Pout} \\
  \cline{2-3}   \cline{5-6}  \cline{8-9}
 & SNR  & CPU Time & & SNR  & CPU Time & & SNR  & CPU Time \\
 \toprule[1.5pt] \\
Proposed Alg. \ref{ALGG} & 35.1935 & 8.8241 && 26.2299 & 1.9834 && 42.3778 & 9.3248  \\ [0.5ex]
 \hline \\
Damek \& Yin Alg. & 34.2577 & 8.8161 && 25.8366 & 1.9524 && 41.1374 & 9.3610  \\ [0.5ex]
 \hline\\
Artacho \& Belen Alg. & 29.1105 & 8.7788 && 22.4081 & 1.9272 && 34.2528 & 9.2409  \\ [0.5ex]
\hline\\
Beck \& Teboulle Alg. & 33.6452 & 8.7521 && 25.5244 & 1.9347 && 40.3326 & 10.1433 \\ [0.5ex]
 \hline
\end{tabular}
\label{table:IRP}
\end{table}

\begin{figure}[H]
    \centering
    \includegraphics[width = 0.9\textwidth]{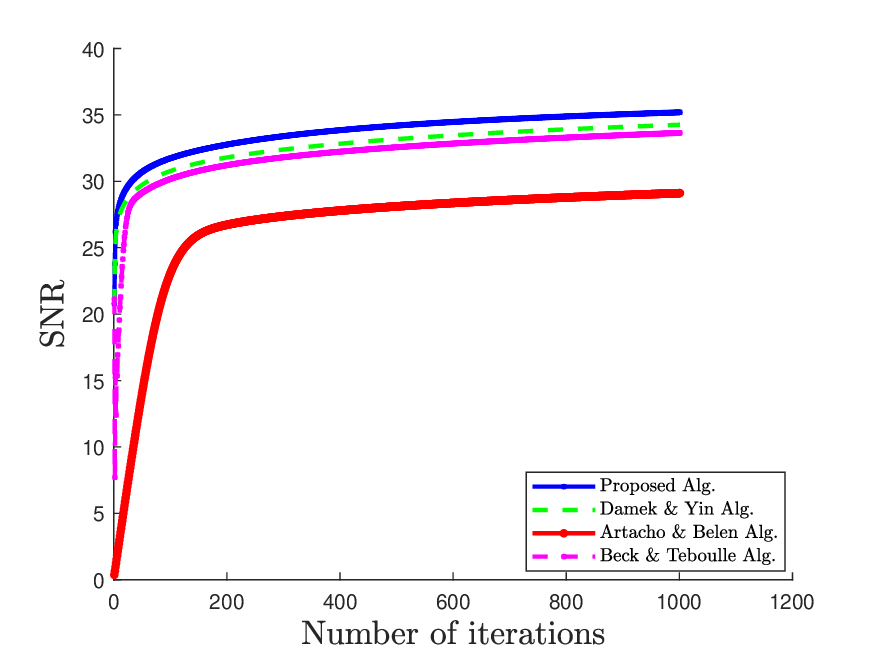}
    \caption{Signal-to-noise ratio (SNR) of different methods for Cameraman image}
    \label{SNR_cam}
\end{figure}

\begin{figure}[H]
    \centering
    \includegraphics[width = 0.9\textwidth]{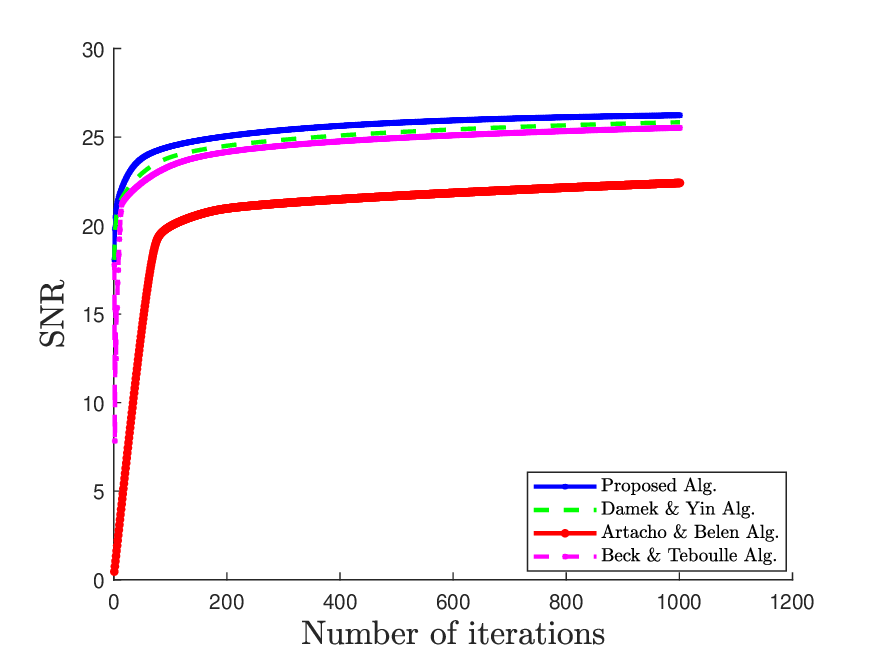}
    \caption{Signal-to-noise ratio (SNR) of different methods for Medical Resonance
Imaging (MRI)}
    \label{SNR_MRI}
\end{figure}

\begin{figure}[H]
    \centering
    \includegraphics[width = 0.9\textwidth]{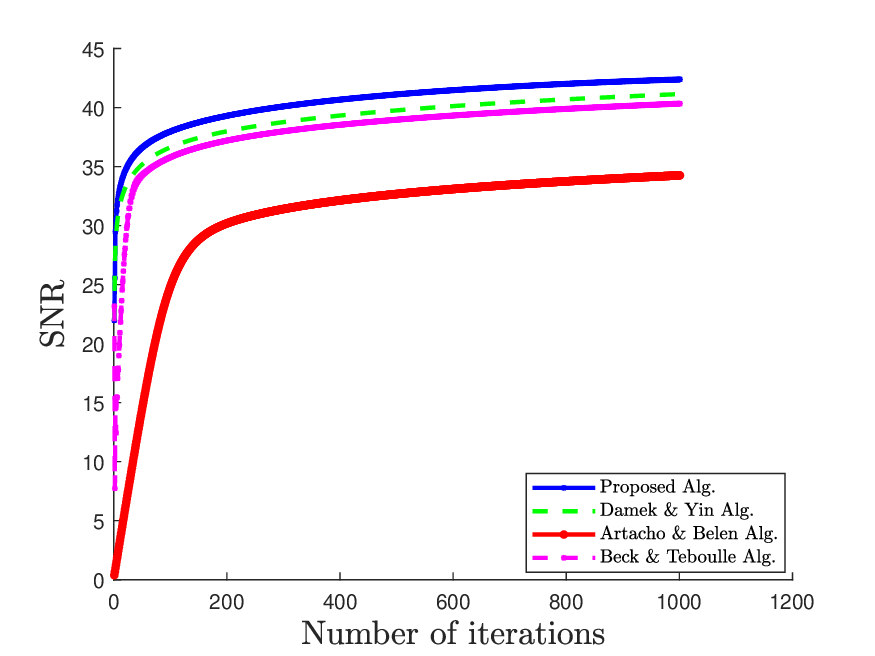}
    \caption{Signal-to-noise ratio (SNR) of different methods for Pout image}
    \label{SNR_Pout}
\end{figure}

\begin{figure}[H]
     \begin{subfigure}[b]{0.57\textwidth}
         \includegraphics[width=\textwidth]{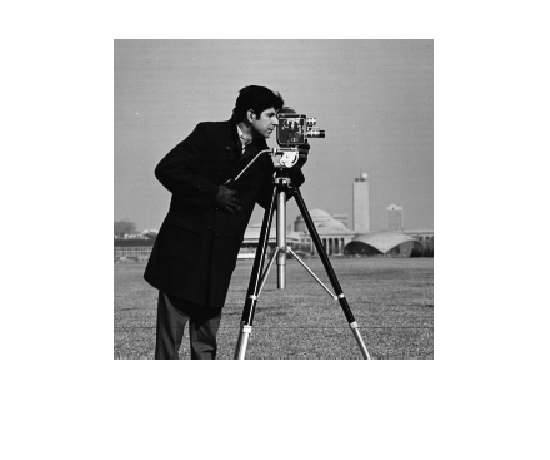}
         \caption{Original Cameraman Image}
     \end{subfigure}
     \begin{subfigure}[b]{0.57\textwidth}
         \includegraphics[width=\textwidth]{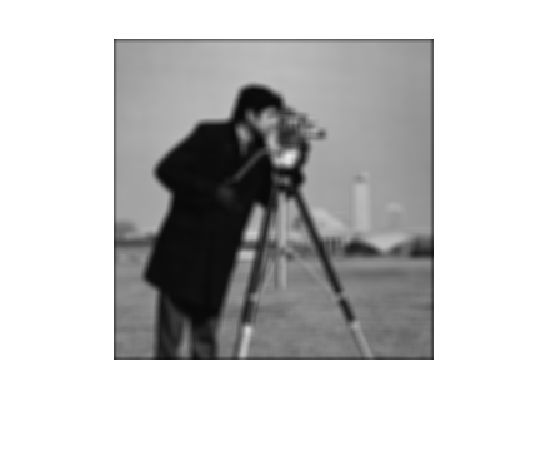}
         \caption{Degraded Cameraman Image}
     \end{subfigure}
     \begin{subfigure}[b]{0.57\textwidth}
         \includegraphics[width=\textwidth]{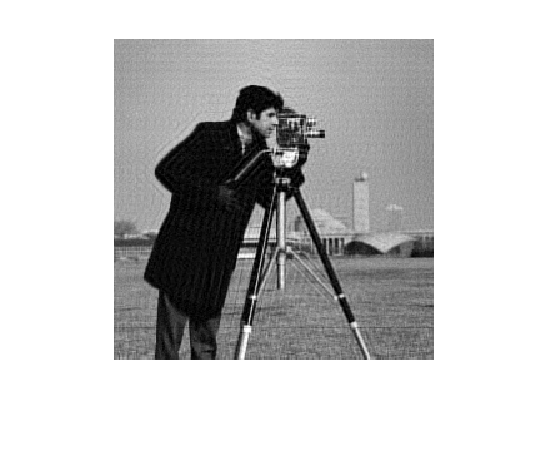}
         \caption{Proposed Alg. \ref{ALGG}}
     \end{subfigure}
     \begin{subfigure}[b]{0.57\textwidth}
         \includegraphics[width=\textwidth]{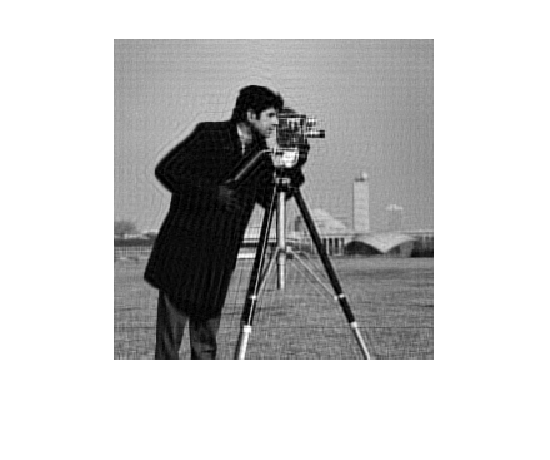}
         \caption{Damek \& Yin Alg.}
     \end{subfigure}
     \begin{subfigure}[b]{0.57\textwidth}
         \includegraphics[width=\textwidth]{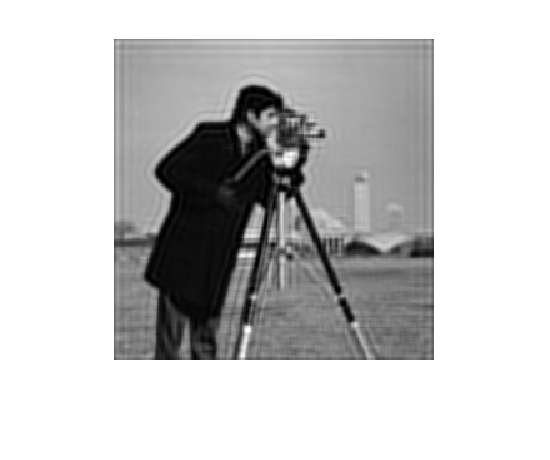}
         \caption{Artacho \& Belen Alg.}
     \end{subfigure}
     \begin{subfigure}[b]{0.57\textwidth}
         \includegraphics[width=\textwidth]{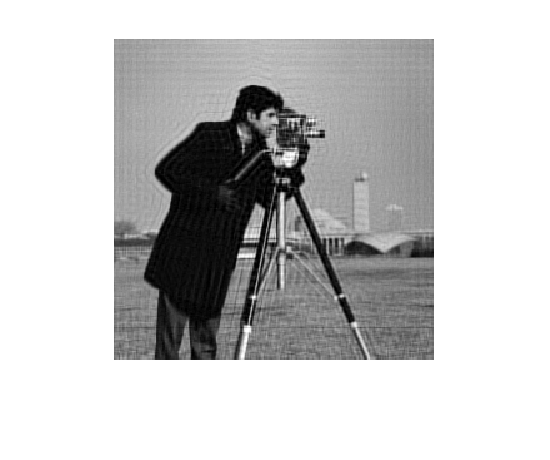}
         \caption{Beck \& Teboulle Alg.}
     \end{subfigure}
        \caption{Example \ref{Sec:LASSO} - Image recovery of Cameraman image by different methods.}
        \label{Cam_image}
\end{figure}

\begin{figure}[H]
     \begin{subfigure}[b]{0.57\textwidth}
         \includegraphics[width=\textwidth]{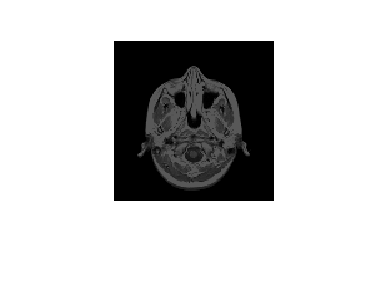}
         \caption{Original MRI}
     \end{subfigure}
     \begin{subfigure}[b]{0.57\textwidth}
         \includegraphics[width=\textwidth]{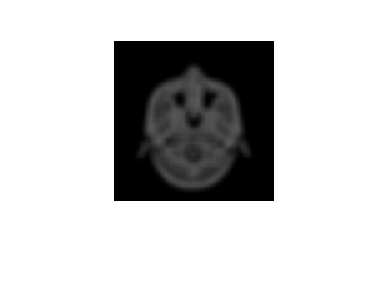}
         \caption{Degraded MRI}
     \end{subfigure}
     \begin{subfigure}[b]{0.57\textwidth}
         \includegraphics[width=\textwidth]{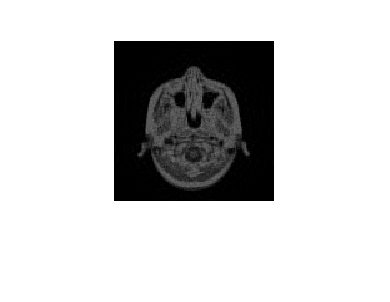}
         \caption{Proposed Alg. \ref{ALGG}}
     \end{subfigure}
     \begin{subfigure}[b]{0.57\textwidth}
         \includegraphics[width=\textwidth]{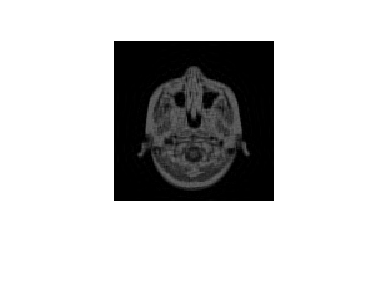}
         \caption{Damek \& Yin Alg.}
     \end{subfigure}
     \begin{subfigure}[b]{0.57\textwidth}
         \includegraphics[width=\textwidth]{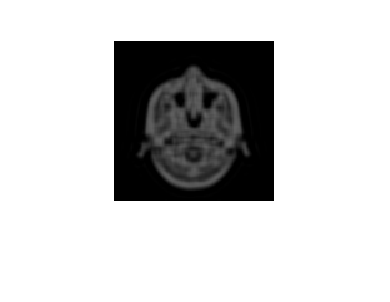}
         \caption{Artacho \& Belen Alg.}
     \end{subfigure}
     \begin{subfigure}[b]{0.57\textwidth}
         \includegraphics[width=\textwidth]{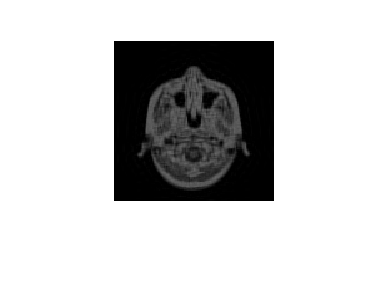}
         \caption{Beck \& Teboulle Alg.}
     \end{subfigure}
        \caption{Example \ref{Sec:LASSO} - Image recovery of MRI by different methods.}
        \label{MRI_image}
\end{figure}

\begin{figure}[H]
     \begin{subfigure}[b]{0.55\textwidth}
         \includegraphics[width=\textwidth]{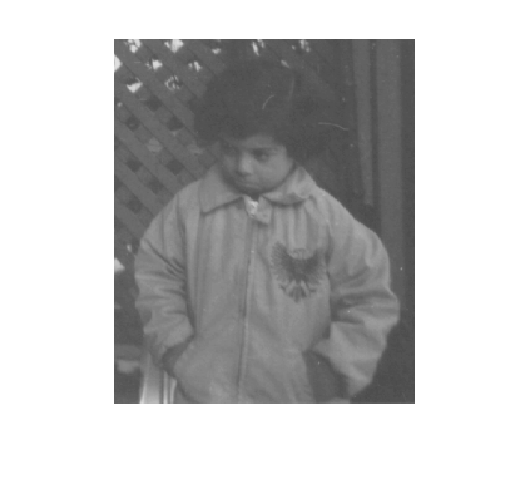}
         \caption{Original Pout Image}
     \end{subfigure}
     \begin{subfigure}[b]{0.55\textwidth}
         \includegraphics[width=\textwidth]{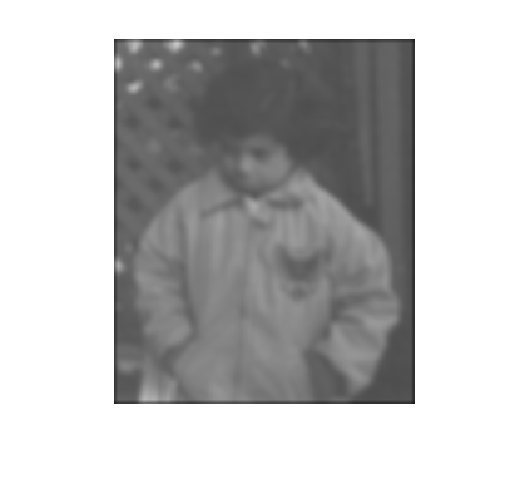}
         \caption{Degraded Pout Image}
     \end{subfigure}
     \begin{subfigure}[b]{0.55\textwidth}
         \includegraphics[width=\textwidth]{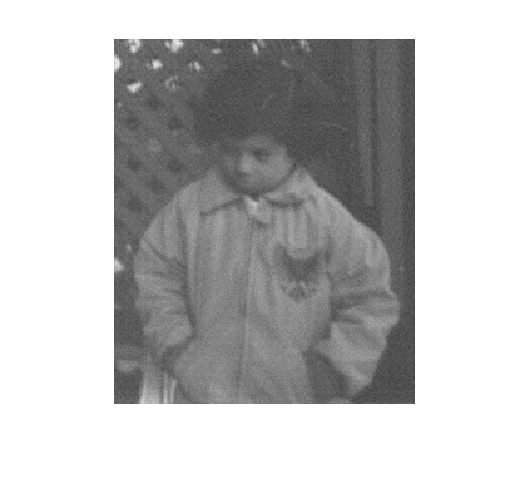}
         \caption{Proposed Alg. \ref{ALGG}}
     \end{subfigure}
     \begin{subfigure}[b]{0.55\textwidth}
         \includegraphics[width=\textwidth]{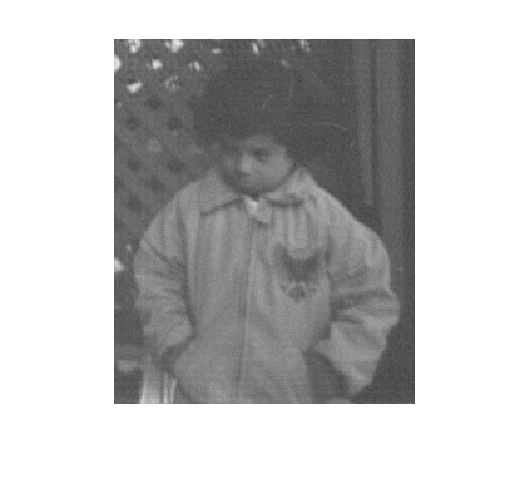}
         \caption{Damek \& Yin Alg.}
     \end{subfigure}
     \begin{subfigure}[b]{0.55\textwidth}
         \includegraphics[width=\textwidth]{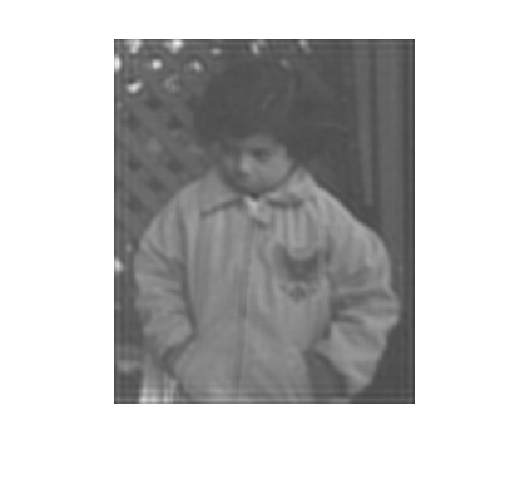}
         \caption{Artacho \& Belen Alg.}
     \end{subfigure}
     \begin{subfigure}[b]{0.55\textwidth}
         \includegraphics[width=\textwidth]{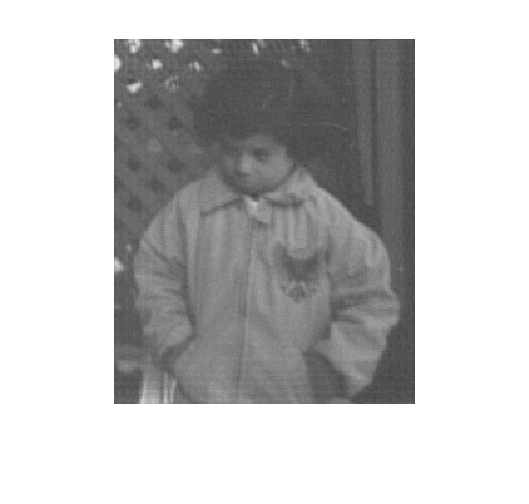}
         \caption{Beck \& Teboulle Alg.}
     \end{subfigure}
        \caption{Example \ref{Sec:LASSO} - Image recovery of Pout image by different methods.}
        \label{Pout_image}
\end{figure}

\begin{rem}$\,$\\
\begin{itemize}
\item Example \ref{Sec:LASSO} showcase the application of our proposed algorithm \ref{ALGG} in image recovery problems and the accuracy in recovering several images.
\item Our method outperformed many other methods proposed by Artacho \& Belen Alg. in \cite{Aragon-Artacho}, Beck \& Teboulle Alg. in \cite{Beck} and Damek \& Yin Alg. in \cite{DaY} after 1,000 iterations. Clearly in Table \ref{table:IRP} and Figures \ref{SNR_cam} - \ref{Pout_image}, our method produced higher signal-to-noise ratio over other methods, compared with, which indicate superiority of our method in recovering the images.
\end{itemize}
\end{rem}

\end{exm}

\begin{exm}\label{ex2}
\subsubsection*{Smoothly Clipped Absolute Deviation (SCAD) Penalty Problem}\label{Sec:SCAD}
Let us consider SCAD penalty problem \cite{SCAD}. Suppose $D\in\R^{M\times N},\ {\rm and}\ b\in\R^M$, the SCAD penalty problem is defined as
\begin{equation}\label{SCADPro}
\displaystyle\min_{u\in\R^N}\frac{1}{2}\|Du - b\|^2_2 + \displaystyle\sum_{k=1}^N q_\xi(|u_k|),
\end{equation}
where the penalty $q_\xi(\cdot)$ is defined as
\begin{equation*}
q_\xi(\omega):=\begin{cases}
\xi\omega,  \hspace{1.2cm} &\omega\leq\xi,\\
\frac{-\omega^2 + 2c\xi\omega - \xi^2}{2(c-1)}, \hspace{0.4cm} &\xi\leq\omega\leq c\xi,\\
\frac{(c+1)\xi^2}{2}, \hspace{0.4cm} &\omega\geq c\xi,
\end{cases}
\end{equation*}
where $c>2$ and the knots of the quadratic spline function is represented by $\xi > 0$.\\

\noindent Given that $\displaystyle \nu\geq\frac{1}{c-1}$, then the function $\displaystyle q_\xi(\cdot)+\frac{\nu}{2}|\cdot|^2$ is convex (see \cite[Theorem 3.1]{GuYZ}). As in \cite{WuLi}, we can formulate the SCAD penalty problem  in the form of problem \eqref{linprob}, with
\begin{align*}
g(u):=& \displaystyle\sum_{k=1}^N q_\xi(|u_k|) + \frac{1}{2(c-1)}\|u\|^2_2 \quad {\rm and}\\
f(u):=& \frac{1}{2}\|Du-b\|^2_2 - \frac{1}{2(c-1)}\|u\|^2_2.
\end{align*}
Accordingly, $f$ is convex and the gradient $\nabla g$ of $g$ is Lipschitz continuous  with Lipschitz constant 
$$L:=\max\left\{\|D^*D\|,\frac{1}{2(c-1)}\right\}.$$
\noindent By the Ballon-Haddad Theorem \cite[Corollary 18.17]{Bauschkebook} $\nabla g$ is $\left(\frac{1}{\|D^*D\|}\right)$-cocoercive. Now, \eqref{load1} can be formulated to this model as the following iterative scheme:

\begin{eqnarray}\label{eqaSCAD}
\left\{  \begin{array}{ll}
      & w_n = x_n + \theta(x_n - x_{n-1}) + \delta (x_{n-1} - x_{n-2});\\ \\
      & v_n = y_n - \gamma\left[D^*(Dy_n -b) - \frac{1}{(c-1)}y_n\right]\\ \\
      & z_n = {\rm arg}\displaystyle\min_{u\in\R^N}\left\{\displaystyle\sum_{k=1}^N q_\xi(|u_k|) + \frac{1}{2(c-1)}\|u\|^2_2 + \frac{1}{2\gamma}\|u-v_n\|^2_2\right\} \\ \\
      & x_{n+1}=(1-\rho)w_n+\rho z_n.
      \end{array}
      \right.
\end{eqnarray}
\noindent To conduct this numerical comparison, we take $x_{-1}=x_0=x_1\in\R^N$ with all entries as 1, we again randomly generate the data $b$. As detailed in \cite{SCAD}, the parameters $\tau$ and $c$ could be empirically gotten via generalized cross-validation or cross-validation techniques, by making use of sample data. In any case, in order to perform our numerical experiment, we set $c = 3.7$, $\tau = 0.1$ and use $||E_n||_2:=||x_n - x_{n+1}||\leq \varepsilon$ with $\varepsilon = 10^{-4}$. The rest of the parameters are stated in Table \ref{tab3}.

\begin{table}[H]
\caption{Methods Parameters for Example \ref{ex2}}
\centering
\begin{tabular}{c c c c c c c c c c c c c c}
 \toprule[1.5pt] \\
Proposed Alg. \ref{ALGG} & $\displaystyle \theta = 0.49$ & $\displaystyle \delta = -0.01$  & $\displaystyle \eta = \frac{1}{||D^*D||}$ & $\displaystyle \gamma = \eta$\\
\\
 & $\displaystyle \beta = \frac{2\gamma}{4\eta - \gamma}$ & $\displaystyle \rho = \frac{0.24}{\beta}$ & $\tau = 0.1$\\
\toprule[1.5pt]\\
Moudafi \& Oliny Alg. & $\displaystyle \theta_n = \frac{2}{||D^*D||}$ & $\displaystyle \gamma_n = 0.24\theta_n$ & $\displaystyle \alpha_n = \frac{n}{1000(n + 1)}$\\
\toprule[1.5pt] \\
Damek \& Yin Alg. & $\displaystyle \eta = \frac{1}{||D^*D||}$ & $\displaystyle \gamma = \eta$ & $\displaystyle \beta = \frac{2\gamma}{4\eta - \gamma}$ & $\displaystyle \rho = \frac{0.24}{\beta}$\\
\toprule[1.5pt] \\
Artacho \& Belen Alg. & $\displaystyle \eta = \frac{1}{||D^*D||}$ & $\displaystyle \gamma = 3.9\eta$ & $\displaystyle \beta = 2 - \frac{\gamma}{2\eta}$ & $\displaystyle \lambda = 0.24\beta$\\
\toprule[1.5pt] \\
Enyi et al. Alg. & $\displaystyle \gamma = \frac{1}{||D^*D||}$ & $\displaystyle \lambda = 0.3$ \\
\toprule[1.5pt] \\
Cevher \& Vu Alg. & $\displaystyle \gamma = \frac{\sqrt{2} - 1}{2||D^*D||}$ \\
\toprule[1.5pt]
\end{tabular}
\label{tab3}
\end{table}

\begin{table}[H]
\caption{Comparison of our proposed Algorithm \ref{ALGG} with other algorithms for the SCAD problem in different dimensions.}
\centering 
\begin{tabular}{c c c c c c c c c c c} 
\toprule[1.5pt]
 & \multicolumn{2}{c}{Case I: $200\times 1000$} & &  \multicolumn{2}{c}{Case II: $300\times 1200$} \\
  \cline{2-3}   \cline{5-6}
 & No. of Iter.  & CPU Time & & No. of Iter.  & CPU Time \\
 \toprule[1.5pt] \\
Proposed Alg. \ref{ALGG} & 58 & $1.8617\times 10^{-3}$ && 72 & $3.1522\times 10^{-3}$ \\ [0.5ex]
 \hline \\
Moudafi \& Oliny Alg. & 92 & $2.7931\times 10^{-3}$ && 116 & $5.1734\times 10^{-3}$ \\ [0.5ex]
 \hline\\
Damek \& Yin Alg. & 119 & $3.5443\times 10^{-3}$ && 148 & $6.9222\times 10^{-3}$ \\ [0.5ex]
\hline\\
Artacho \& Belen Alg. & 763 & $2.4867\times 10^{-2}$ && 904 & $3.9158\times 10^{-2}$ \\ [0.5ex]
 \hline\\
Enyi et al. Alg. & 101 & $3.1676\times 10^{-3}$ && 132 & $5.4510\times 10^{-3}$ \\ [0.5ex]
\hline\\
Cevher \& Vu Alg. & 196 & $5.8488\times 10^{-3}$ && 277 & $1.2362\times 10^{-2}$ \\ [0.5ex]
\toprule[1.5pt]
 & \multicolumn{2}{c}{Case III: $400\times 1400$} & &  \multicolumn{2}{c}{Case IV: $500\times 1600$} \\
  \cline{2-3}   \cline{5-6}
 & No. of Iter.  & CPU Time & & No. of Iter.  & CPU Time \\
 \toprule[1.5pt] \\
Proposed Alg. \ref{ALGG} & 87 & $5.9223\times 10^{-3}$ && 111 & $7.6624\times 10^{-3}$ \\ [0.5ex]
 \hline \\
Moudafi \& Oliny Alg. & 127 & $8.1677\times 10^{-3}$ && 147 & $1.1080\times 10^{-2}$ \\ [0.5ex]
 \hline\\
Damek \& Yin Alg. & 174 & $9.4503\times 10^{-3}$ && 202 & $1.4682\times 10^{-2}$ \\ [0.5ex]
\hline\\
Artacho \& Belen Alg. & 1060 & $6.1568\times 10^{-2}$ && 1255 & $9.4815\times 10^{-2}$ \\ [0.5ex]
 \hline\\
Enyi et al. Alg. & 164 & $1.0961\times 10^{-2}$ && 187 & $1.4637\times 10^{-2}$ \\ [0.5ex]
\hline\\
Cevher \& Vu Alg. & 314 & $1.7188\times 10^{-2}$ && 334 & $2.4966\times 10^{-2}$ \\ [0.5ex]
 \hline
\end{tabular}
\label{table:SCAD} 
\end{table}

\begin{figure}[H]
\minipage{0.50\textwidth}
 \includegraphics[width=\linewidth]{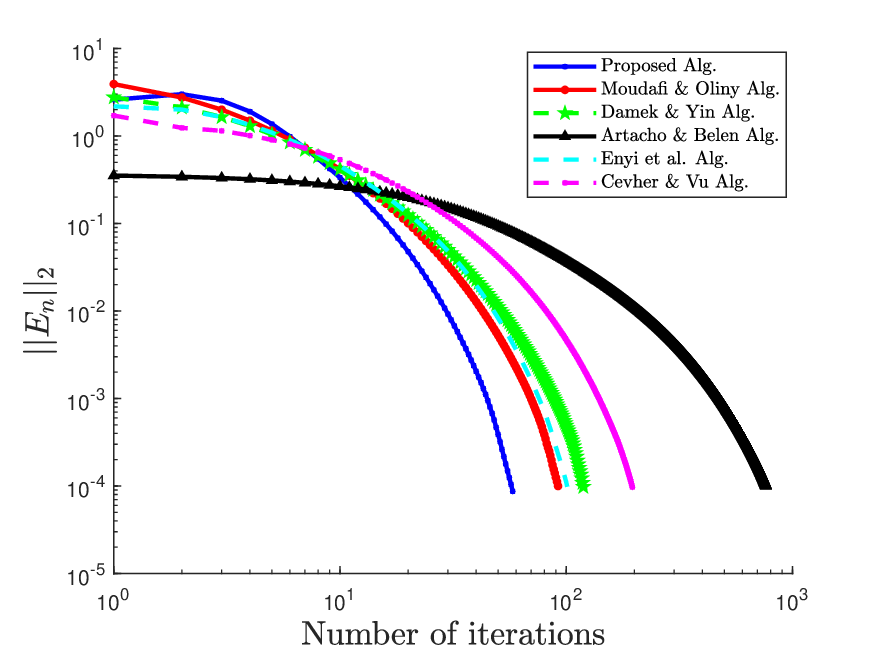}
 \subcaption{Case I}
\endminipage\hfill
\minipage{0.50\textwidth}
 \includegraphics[width=\linewidth]{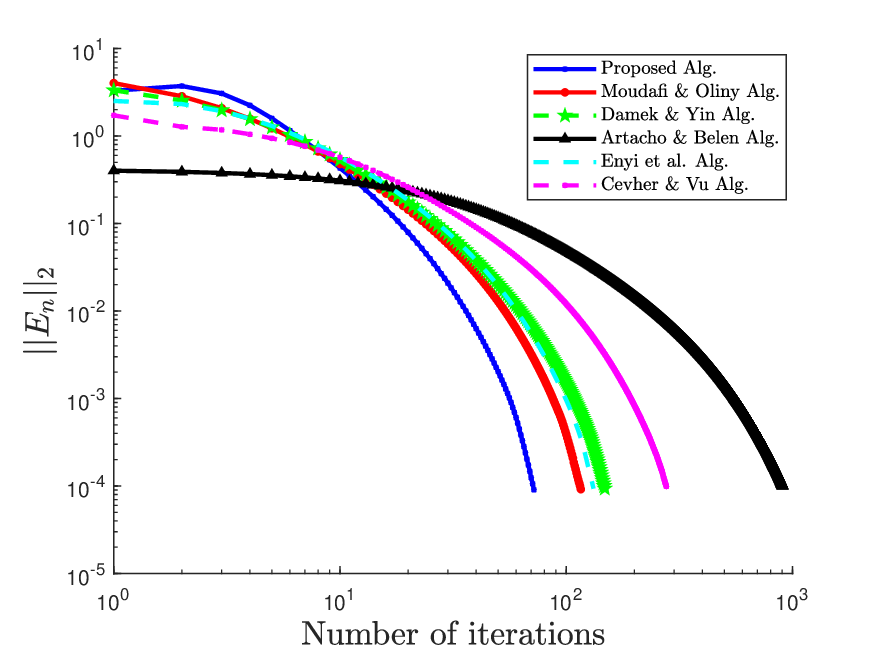}
  \subcaption{Case II}
\endminipage
\caption{SCAD problem: Comparison of different methods for Cases I \& II}\label{SCAD_1}
\end{figure}

\begin{figure}[H]
\minipage{0.45\textwidth}
 \includegraphics[width=\linewidth]{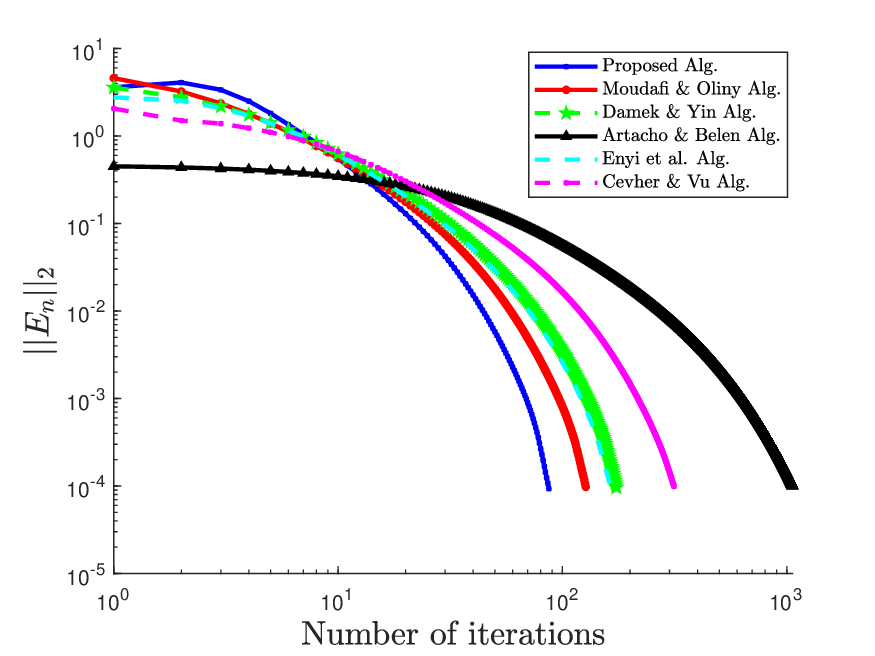}
 \subcaption{Case III}
\endminipage\hfill
\minipage{0.45\textwidth}
 \includegraphics[width=\linewidth]{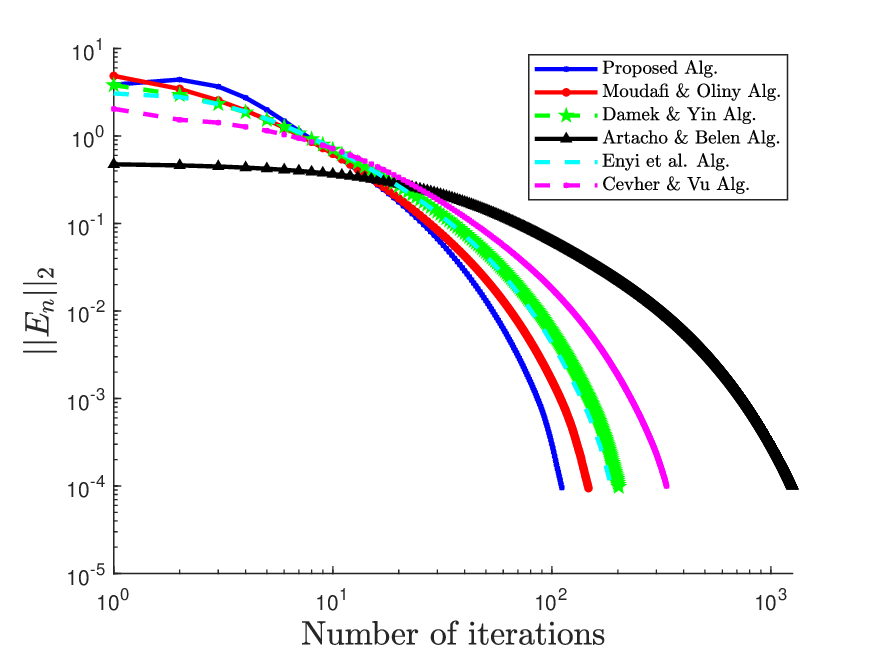}
  \subcaption{Case IV}
\endminipage
\caption{SCAD problem: Comparison of different methods for Cases III \& IV}\label{SCAD_2}
\end{figure}

\begin{figure}[H]
\minipage{0.50\textwidth}
 \includegraphics[width=\linewidth]{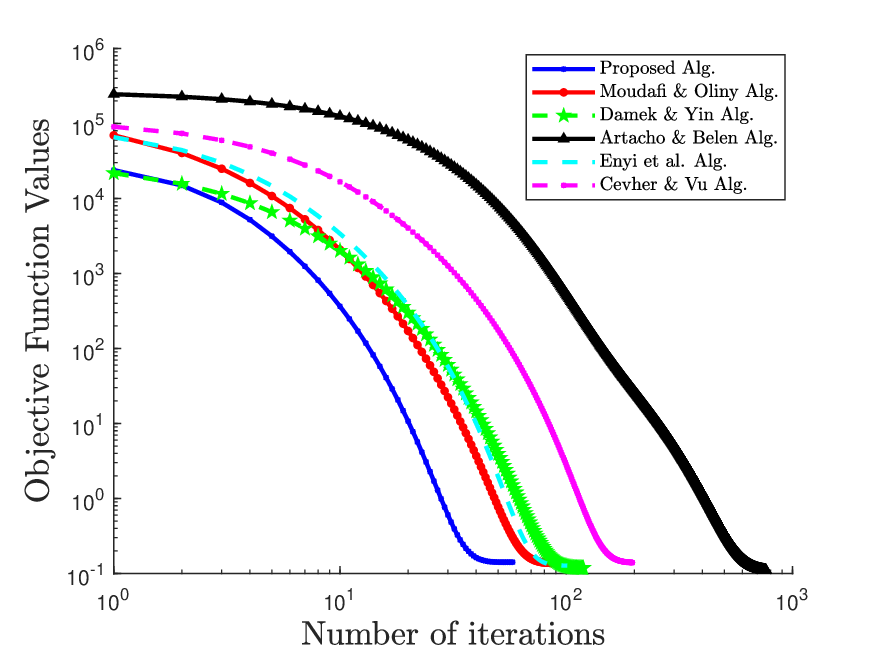}
 \subcaption{Case I}
\endminipage\hfill
\minipage{0.50\textwidth}
 \includegraphics[width=\linewidth]{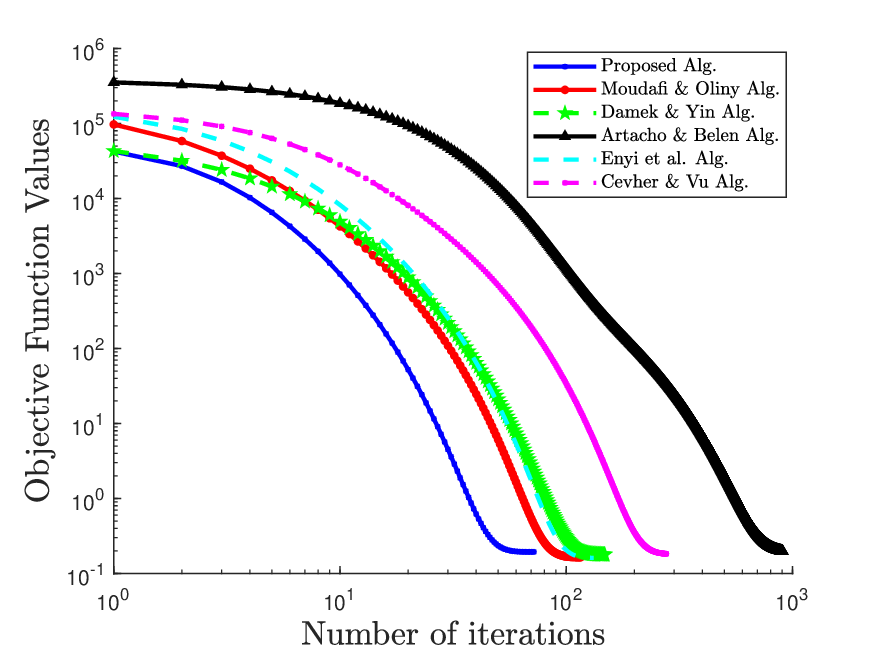}
  \subcaption{Case II}
\endminipage
\caption{SCAD problem: Comparison of different methods for Cases I \& II (Objective Function)}\label{SCAD_3}
\end{figure}

\begin{figure}[H]
\minipage{0.45\textwidth}
 \includegraphics[width=\linewidth]{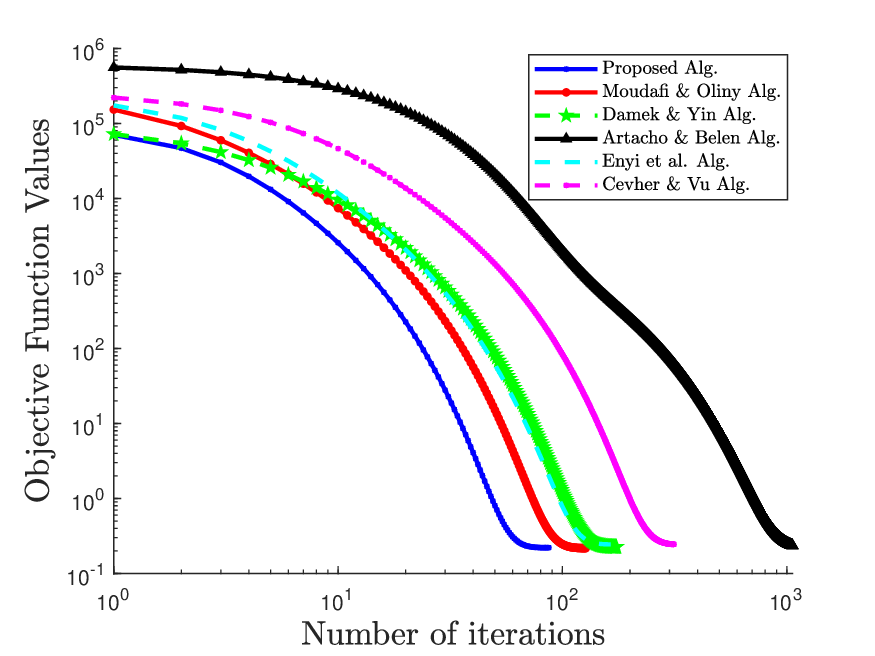}
 \subcaption{Case III}
\endminipage\hfill
\minipage{0.45\textwidth}
 \includegraphics[width=\linewidth]{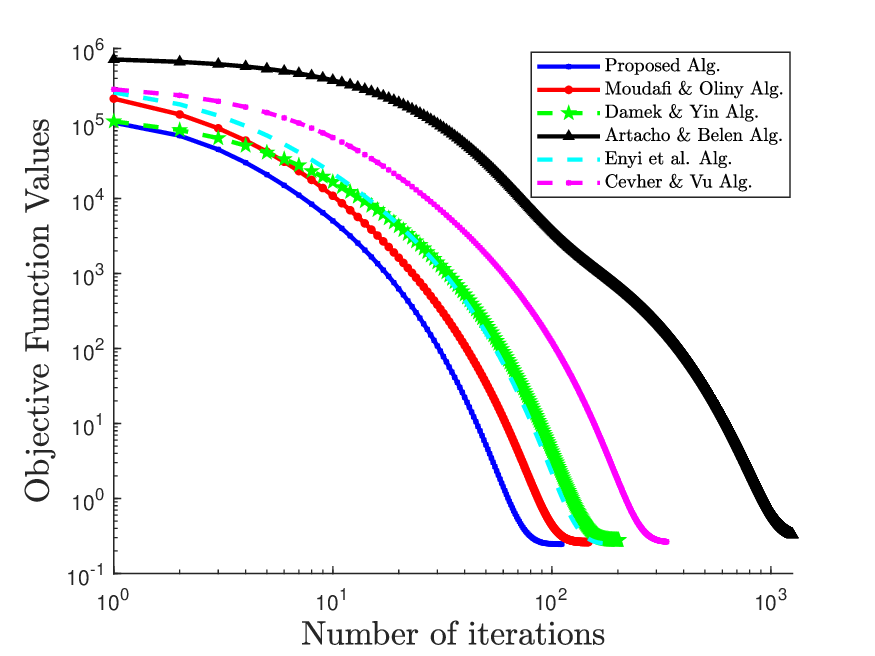}
  \subcaption{Case IV}
\endminipage
\caption{SCAD problem: Comparison of different methods for Cases III \& IV (Objective Function)}\label{SCAD_4}
\end{figure}

\begin{rem}$\,$\\
\begin{itemize}
\item We demonstrate the effectiveness and easy-to-implement nature of our proposed algorithm through Example \ref{ex2} for Smoothly Clipped Absolute Deviation (SCAD) Penalty Problem.
\item Clearly, our method outperformed several state of the heart methods proposed by Moudafi \& Oliny Alg. in \cite{ModOli}, Damek \& Yin Alg. in \cite{DaY}, Artacho \& Belen Alg. in \cite{Aragon-Artacho}, Enyi et al. Alg. in \cite{enyi} and Cevher \& Vu Alg. in \cite{CevVu} in both CPU time and the number of iterations. (see Table \ref{table:SCAD} and Figures \ref{SCAD_1} - \ref{SCAD_4}).
\end{itemize}
\end{rem}
\end{exm}

\section{ Final Remarks}\label{Sec6}
This paper proposes a method which is a combination of two-step inertial extrapolation steps, relaxation parameter and splitting algorithm of Davis and Yin to find a zero of the sum of three monotone operators for which two are maximal monotone and the third co-coercive. Weak convergence results are obtained without assuming summability conditions imposed on inertial parameters and the sequence of iterates assumed in recent results on multi-step inertial methods in the literature, and some previously known related methods in the literature are recovered. Numerical illustrations with test problems drawn from Image Restoration Problem (IRP) and Smoothly Clipped Absolute Deviation (SCAD) Penalty Problem in statistical learning are given to illustrate the presented convergence theory. These examples show that our proposed algorithm is easy to implement and effective in handling important application like IRP and SCAD problems. One of the contributions of this paper is to improve on the setbacks observed recently in the literature that one-step inertial Douglas-Rachford splitting method may fail to provide acceleration.\\

\noindent As part of our future project, we intend to further consider our proposed method with correction terms to solve the same problem considered in this paper.

\end{document}